\title{Well-posedness results for the 3D Zakharov-Kuznetsov equation}
\date{}
\author{Francis Ribaud\thanks{Laboratoire d'Analyse et de Math\'ematiques Appliqu\'ees - Universit\'e Paris-Est - 5 Bd. Descartes - Champs-Sur-Marne, 77454 Marne-La-Vall\'ee Cedex 2 (FRANCE)} \and St\'ephane Vento\thanks{Laboratoire Analyse, G\'eom\'etrie et Applications - Universit\'e Paris 13 - Institut Galil\'ee, 99 avenue J.B. Cl\'ement,93430 Villetaneuse (FRANCE)}}
\numberwithin{equation}{section}
\newtheorem{theorem}{Theorem}[section]
\newtheorem{lemma}{Lemma}[section]
\newtheorem{proposition}{Proposition}[section]
\newcommand\re[1]{(\ref{#1})}
\def\R{\mathbb{R}}
\def\Z{\mathbb{Z}}
\def\S{\mathcal{S}}
\def\F{\mathcal{F}}
\def\eps{\varepsilon}
\def\supp{\mathop{\rm supp}\nolimits}
\def\sgn{\mathop{\rm sgn}\nolimits}
\newcommand\cro[1]{\langle #1 \rangle}
\begin{document}
\maketitle
\noindent {\bf Abstract.}\, We prove the local well-posedness of the three-dimensional Zakharov-Kuznetsov equation $\partial_tu+\Delta\partial_xu+ u\partial_xu=0$
in the Sobolev spaces $H^s(\R^3)$, $s>1$, as well as in the Besov space $B^{1,1}_2(\R^3)$. The proof is based on a sharp maximal function estimate in time-weighted spaces.
\\

\noindent
{\bf Keywords:} KdV-like equations, Cauchy problem\\
{\bf AMS Classification:} 35Q53, 35B65, 35Q60

\section{Introduction and main results}

In this paper we consider the local Cauchy problem for the three-dimensional Zakharov-Kuznetsov $(ZK)$ equation
\begin{equation}\label{zk}
\left\{\begin{array}{ll} u_t+\Delta u_x+uu_x=0,\\ u(0)=u_0,\end{array}\right.
\end{equation}
where $u=u(t,x,y,z)$, $u_0=u_0(x,y,z)$, $t  \in \R$ and $(x,y,z)\in \R^3$.

This equation was introduced by Zakharov and Kuznetsov in \cite{ZK} to describe the propagation of ionic-acoustic waves in magnetized plasma. The formal derivation of $(ZK)$ from the Euler-Poisson system with magnetic field can be found in \cite{LS}.

Clearly, the Zakharov-Kuznetsov equation can be considered as a multi-dimensional generalization of the well-known one-dimensional Korteweg-de Vries equation

\begin{equation}\label{kdv}
\left\{\begin{array}{ll} u_t+u_{xxx}+uu_x=0,\\ u(0)=u_0.\end{array}\right.
\end{equation}

We stress out the attention of the reader that contrary to some other generalizations of the Korteweg-de Vries equation (like the Kadomtsev-Petviasvili equations) the $(ZK)$ equation is not completely integrable and possesses only two invariant quantities by the flow. These two invariant quantities are the $L^2(\R^3)$ norm
$$
N(t)=\int_{\R^3}u^2(t,x,y,z) \,dxdydz \;,
$$
and the Hamiltonian
$$
H(t)=\frac{1}{2} \, \int_{\R^3} \left( (\nabla u(t,x,y,z))^2-\frac{u(t,x,y,z)^2}{3}\right) \, dxdydz\,.
$$
Hence, it is a  natural issue to study the Cauchy problem for the $(ZK)$ equation in the Sobolev space $H^1(\R^3)$ since any local well-posedness result in this space would provide global  well-posedness \footnote{Note that global existence occurs provided the local existence time only depends on the norm of the initial data in a suitable way. This is usually the case when solving the equation by a standard fixed point procedure.}.

Another difference with usual generalization of the KdV equation is that the resonant function associated to the $(ZK)$ equation seems too complex to develop a Bourgain approach, see \cite{LP}. Indeed, this function is defined in the hyperplane
$\bar{\xi_1}+\bar{\xi_2}+\bar{\xi_3}=0$ by
$$
h(\bar{\xi_1}, \bar{\xi_2}, \bar{\xi_3}) = \xi_1|\bar{\xi_1}|^2+\xi_2|\bar{\xi_2}|^2+\xi_3|\bar{\xi_3}|^2,\quad \bar{\xi_j}=(\xi_j, \eta_j,\mu_j)\in\R^3,
$$
and its zero set is not so easy to understand. This make again a   sharp difference with the Kadomtsev-Petviasvili equations where the use of the resonant function allows to derive local well-posedness (for the KP-II equation, see \cite{B}) or local ill-posedness (for the KP-I equation, see\cite{MST}). This explains why we follow the approach of Kenig, Ponce and Vega introduced in \cite{KPV3} for the study of the generalized KdV equation rather than a Bourgain approach.

In the the two-dimensional case,   Faminskii \cite{F} proved the local and global well-posedness of $(ZK)$ for initial data in $H^1(\R^2)$. This result was recently improved by Linares and Pastor  who obtained in \cite{LP} the local well-posedness  in $H^s(\R^2)$, $s>3/4$. The main tool to derive those results is the following $L^2_xL^{\infty}_{yT}$ linear estimate \cite{F},
$$
\forall s>3/4 , \quad \quad \|U(t)\varphi\|_{L^2_xL^{\infty}_{yT}}\leq C \| \varphi\|_{H^s(\R^2)}\;,
$$
where $U(t)\varphi$ denotes the free operator associated to the linear part of the $(ZK)$ equation. Note also  that in \cite{LPS}, Linares, Pastor and Saut recently obtained  local well-posedness results in some spaces which contains the one-dimensional solitary-waves of $(ZK)$ as well as  perturbations.

In the three-dimensional case, as far as we know, the only available result concerning the local well-posedness of $(ZK)$ in the usual Sobolev spaces goes back to Linares and Saut who proved in \cite{LS} the local well posedness in $H^s(\R^3$), $s>9/8$. In this paper we prove the following result,
\begin{theorem}\label{th-sobolev}
For any $s>1$ and $u_0\in H^s(\R^3)$, there exist $T>0$ and a unique solution $u$ of \re{zk} in
$$X_T^s\cap C_b([0,T], H^s(\R^3)).$$
Moreover, the flow-map $u_0\mapsto u$ is Lipschitz on every bounded set of $H^s(\R^3)$.
\end{theorem}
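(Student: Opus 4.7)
The plan is to reformulate \re{zk} in Duhamel form
\begin{equation*}
u(t) = U(t)u_0 - \int_0^t U(t-t')\,(u\partial_x u)(t')\,dt',
\end{equation*}
where $U(t)$ denotes the unitary group generated by $-\Delta\partial_x$, and to solve this fixed-point equation in a suitable resolution space $X_T^s$ by a contraction argument on a closed ball, following the scheme introduced by Kenig-Ponce-Vega in \cite{KPV3} for the generalized KdV equation.

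I would design the norm of $X_T^s$ so that, in addition to the energy norm $L^\infty_T H^s_{xyz}$, it controls three kinds of auxiliary space-time quantities adapted to $U(t)$: a Kato-type local smoothing norm that gains one $x$-derivative locally in space, a Strichartz-type norm issued from the decay of the oscillatory integral with phase $\xi(\xi^2+\eta^2+\mu^2)$, and, crucially, a maximal function norm of the schematic form $\|D_x^{1/2}U(t)\varphi\|_{L^{p}_{yz}L^\infty_{T,x}}$, refined with an appropriate time weight as announced in the abstract. The combination of the local smoothing and maximal function norms is what compensates the derivative loss present in $u\partial_x u$ and allows one to recover a positive power of $T$ in the nonlinear estimate.

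The four steps are then as follows. First, establish the linear estimates $\|U(\cdot)u_0\|_{X_T^s}\lesssim \|u_0\|_{H^s}$ and the corresponding non-homogeneous version for the Duhamel integral, by interpolating and summing dyadic pieces of the sharp time-weighted maximal function bound together with the Kato smoothing and Strichartz inequalities. Second, prove the nonlinear estimate
\begin{equation*}
\Bigl\|\int_0^t U(t-t')(u\partial_x u)(t')\,dt'\Bigr\|_{X_T^s}\lesssim T^{\theta}\,\|u\|_{X_T^s}^2,
\end{equation*}
for some $\theta>0$, by decomposing $u\partial_x u$ via Littlewood-Paley and placing the low-frequency factor in an $L^\infty_{T,x}$-type space through the maximal function norm while the high-frequency factor is controlled by the local smoothing gain. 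Third, deduce existence and uniqueness in a small ball of $X_T^s$ by Banach's fixed point theorem, and persistence in $C_b([0,T],H^s)$ from the energy component of $X_T^s$. Fourth, obtain the Lipschitz dependence by running the same estimate on the difference of two solutions.

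The main obstacle is step two, and more precisely the time-weighted maximal function estimate on which it rests: in three space dimensions the phase $h$ is genuinely anisotropic and considerably harder to analyse than in the two-dimensional situation of \cite{F,LP}, so one cannot simply transpose Faminskii's $L^2_xL^\infty_{yT}$ bound. The sharpening of this estimate, with a gain in the regularity threshold coming from the time weight, is precisely what is needed to push the Sobolev index from the $s>9/8$ of \cite{LS} down to $s>1$; once it is available, the remainder of the argument is a relatively standard application of the Kenig-Ponce-Vega machinery.
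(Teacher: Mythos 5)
Your outline follows the same strategy as the paper: Duhamel formulation, contraction mapping in a Kenig--Ponce--Vega-type resolution space built from the energy norm, a Kato smoothing norm gaining derivatives in $L^\infty_xL^2_{yzT}$, and a maximal function norm, with the bilinear term handled by putting one Littlewood--Paley factor in the maximal norm and recovering the derivative loss through the smoothing effect. However, the proposal has a genuine gap at exactly the point you identify as ``the main obstacle'': the three-dimensional maximal function estimate is asserted, not proved, and it is the entire mathematical content of the theorem. The rest of the argument (linear estimates by $TT^*$ and duality, Christ--Kiselev-type passage to the retarded operator, paraproduct decomposition of $u^2$, H\"older in time to extract $T^\theta$) is indeed routine once that estimate is available. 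The paper proves $\|U(t)\varphi\|_{L^2_xL^\infty_{yzT}}\lesssim\|\varphi\|_{H^s}$ for $s>1$ by reducing it to the kernel bound $\|I_k\|_{L^1_xL^\infty_{yzT}}\lesssim 2^{(2+\eps)k}$ for the dyadically localized oscillatory integrals $I_k$, which requires a careful case analysis ($|x|\ll t2^{2k}$, $|x|\gg t2^{2k}$, $|x|\sim t2^{2k}$), integration of the transverse variables to reduce to a one-dimensional Airy-type phase, and Van der Corput's lemma; none of this is sketched in your proposal, and it cannot be obtained by ``transposing'' the two-dimensional argument, as you correctly note.

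Beyond this central omission, several of the specifics you propose point in the wrong direction. The maximal norm you write, $\|D_x^{1/2}U(t)\varphi\|_{L^p_{yz}L^\infty_{T,x}}$, takes the supremum over $x$ and posits a gain of half an $x$-derivative; the estimate that actually holds is $L^2$ in $x$ with the supremum over $(y,z,t)$, and far from gaining derivatives it necessarily \emph{loses} one: the paper shows that $\|t^\alpha U(t)\varphi\|_{L^2_xL^\infty_{yzT}}\lesssim\|\varphi\|_{H^s}$ forces $s\ge 1$, so the geometry of the norm matters and your schematic version would have to be substantially corrected before the bilinear estimate could close. The time weight $t^\alpha$ is not needed for $s>1$ (there the factor $T^\theta$ comes from H\"older in time applied to the energy-norm factor); it is the device that rescues the endpoint Besov case $B^{1,1}_2$, where the unweighted estimate fails. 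Finally, no Strichartz norm is used or needed. In short, the architecture is right, but the load-bearing estimate and the correct form of the resolution space are both missing.
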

To derive our result, the main issue is to prove the local in time linear estimate
\begin{equation}\label{main-est}
\forall T<1, \quad \forall s>1, \quad  \|U(t)\varphi\|_{L^2_xL^{\infty}_{yzT}}\leq C \| \varphi\|_{H^s(\R^3)},
\end{equation}
where $U(t)\varphi$ denotes the free linear operator associated to the $(ZK)$ equation. Moreover, having a short look on the proof of Theorem \ref{th-sobolev}, we note that  any improvement of the linear estimate (\ref{main-est}) will immediately give the corresponding improvement for  Theorem \ref{th-sobolev}. Unfortunately we prove in Section \ref{sec-lin} that the $L^2_xL^{\infty}_{yzT}$ linear estimate (\ref{main-est}) fails when $s<1$.
Hence, this seems to indicate that the case $s=1$ could be critical for the well-posedness of the $(ZK)$ equation.

Concerning the "critical case" $s=1$, we have  unfortunately not been able to prove the local well-posedness in the natural energy space $H^1(\R^3)$. Nonetheless, working with initial data in the Besov space $B^ {1,1}_2(\R^ 3)$, we have the following
\begin{theorem}\label{th-besov}
For any $u_0\in B^{1,1}_2(\R^3)$, there exist $T>0$ and a unique solution $u$ of \re{zk} in
$$X_T\cap C_b([0,T], B^{1,1}_2(\R^3)).$$
Moreover, the flow-map $u_0\mapsto u$ is Lipschitz on every bounded set of $B^{1,1}_2(\R^3)$.
\end{theorem}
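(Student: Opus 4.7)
The plan is to repeat the fixed-point argument used for Theorem~\ref{th-sobolev} at the critical regularity $s=1$, but performed Littlewood-Paley block by Littlewood-Paley block and summed with the $\ell^1$ indexing that defines $B^{1,1}_2(\R^3)$. The whole point of switching from $H^1$ to $B^{1,1}_2$ is exactly this: the $\ell^1$-summability in the first Besov index buys back the endpoint loss in the maximal estimate \re{main-est} at $s=1$, which fails in the $H^s$ scale.

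First I would establish the frequency-localized counterpart of \re{main-est} at the critical level: for each dyadic projection $P_N$ and each $T<1$,
\begin{equation*}
\|U(t)P_N\varphi\|_{L^2_xL^\infty_{yzT}} \lesssim N\,\|P_N\varphi\|_{L^2(\R^3)},
\end{equation*}
with a constant independent of $N$. This should be the block-level strengthening of the $s>1$ estimate underlying Theorem~\ref{th-sobolev}: restricting to a single frequency shell replaces the spectral weight $\langle\xi\rangle^s$ by the scalar $N^s$, so one can afford the critical $s=1$ on each block, while the logarithmic divergence only shows up when the blocks are recombined in $\ell^2$. Summing in $\ell^1$ with weight $N$ then yields the Besov maximal estimate
\begin{equation*}
\|U(t)\varphi\|_{L^2_xL^\infty_{yzT}} \lesssim \sum_{N} N\,\|P_N\varphi\|_{L^2} = \|\varphi\|_{B^{1,1}_2}.
\end{equation*}

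Next I would define $X_T$ as the weighted $\ell^1$-sum over $N$ of the single-block norms implicitly used in the definition of $X_T^s$ in the proof of Theorem~\ref{th-sobolev}, so that by construction $U(t):B^{1,1}_2\to X_T$ is bounded and the injection $X_T\hookrightarrow C_b([0,T],B^{1,1}_2)$ holds. The nonlinear step is the Duhamel bound
\begin{equation*}
\Big\|\int_0^tU(t-t')(u\partial_xu)(t')\,dt'\Big\|_{X_T}\lesssim T^\theta\,\|u\|_{X_T}^2
\end{equation*}
for some $\theta>0$. To obtain it I would decompose $u\partial_xu$ by a Bony paraproduct: the low-high and high-low pieces follow from the block linear estimates combined with Bernstein's inequality, while the high-high $\to$ low interaction is the delicate one, since two $\ell^1$-indexed inputs must produce an $\ell^1$-indexed output. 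A Cauchy-Schwarz in the two incoming frequencies together with the near-orthogonality of the Littlewood-Paley pieces in $L^2$ should preserve summability and close this term.

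Once the linear and bilinear estimates are in hand, Picard iteration in a ball of $X_T$ produces the unique solution and gives the Lipschitz dependence on the data, exactly as in Theorem~\ref{th-sobolev}. The main obstacle is clearly the first step: the block-level maximal estimate at $s=1$ must carry \emph{no} residual logarithmic loss in $N$, since any such loss would spoil the $\ell^1$-summation defining $B^{1,1}_2$; if a $\log N$ loss turned out to be unavoidable, it would have to be absorbed by $T^\theta$ at the bilinear stage, which leaves very little room to close the contraction.
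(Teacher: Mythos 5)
Your overall architecture matches the paper's: frequency-localized maximal estimates summed with the $\ell^1$ weight $2^k$ that defines $B^{1,1}_2$, a resolution norm built from energy, Kato-smoothing and maximal pieces, a telescoping/paraproduct decomposition of $u^2$, and a contraction argument. However, your key first step --- the unweighted block estimate $\|U(t)\Delta_k\varphi\|_{L^2_xL^\infty_{yzT}}\lesssim 2^k\|\Delta_k\varphi\|_{L^2}$ with \emph{no} loss in $k$ --- is precisely what the paper does not prove and apparently cannot. What the paper establishes without a weight is only $\|I_k\|_{L^1_xL^\infty_{yzT}}\lesssim 2^{(2+\eps)k}$ (with a genuinely logarithmic loss $(1+M)2^{2M}$ at the kernel level, coming from the stationary region $|x|\sim t2^{2k}$ where the kernel only decays like $|x|^{-1}$), which after the $TT^*$ argument yields a block estimate with constant $2^{(1+\eps)k}$ --- enough for $H^s$, $s>1$, but fatal for the $\ell^1$ summation at $s=1$. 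You correctly identify this as the main obstacle, but your fallback (absorbing a $\log N$ loss into $T^\theta$) does not work: a loss growing in $N$ cannot be compensated by a power of $T$ uniformly in $N$.

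The missing idea is the time weight. The paper proves instead the \emph{weighted} lossless estimate $\|t^\alpha \Delta_k U(t)\varphi\|_{L^2_xL^\infty_{yzT}}\lesssim 2^k\|\Delta_k\varphi\|_{L^2}$ for $\alpha\ge 3/8$: the factor $t^{2\alpha}$ in the kernel bound upgrades the decay in the region $|x|\sim t2^{2k}$ from $|x|^{-1}$ to an integrable power, killing the logarithm. Correspondingly, the maximal component of $\|\cdot\|_{X_T}$ is defined as $\sum_k\|t^\alpha\Delta_k u\|_{L^2_xL^\infty_{yzT}}$ with $3/8\le\alpha<1/2$, and in the bilinear estimate one writes $\Delta_juP_ju=(t^{-\alpha}\Delta_ju)(t^\alpha P_ju)$, putting $t^{-\alpha}\Delta_ju$ in $L^2_{t,\bar x}$ (which costs $T^{1/2-\alpha}$, finite precisely because $\alpha<1/2$) and $t^\alpha P_ju$ in the weighted maximal norm. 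Without introducing this weight both in the linear estimate and in the definition of $X_T$, your scheme does not close; with it, the rest of your outline (paraproduct, discrete Young inequality for the $\ell^1$ sums, Picard iteration) goes through essentially as you describe.
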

This result clearly improves Theorem \ref{th-sobolev} in view of the well-known embeddings
$$\forall s>1,\quad H^ s(\R^3)\hookrightarrow B^{1,1}_2(\R^3)\hookrightarrow H^1(\R^3).$$
To get Theorem \ref{th-besov}, our  main ingredient is to prove the unusual weighted-in-time linear estimate for phase localized functions
$$
\forall \alpha \geq 3/8, \quad \quad \|t^\alpha \Delta_k U(t)\varphi\|_{L^2_{x}L^\infty_{yzT}}\lesssim 2^k\|\Delta_k\varphi\|_{L^2}.
$$
This estimate, combined with the standard Kato smoothing estimate (\ref{smooth}), allows us to perform a fixed point argument on the Duhamel formulation of $(ZK)$.

This paper is organized as follows. In Section \ref{sec-not} we introduce our notations and define the resolution spaces. Section \ref{sec-lin} is devoted to estimates related to the linear part of the equation. Finally we prove the key bilinear estimates in Section \ref{sec-th}.

\section{Notation}\label{sec-not}
For $A,B>0$, $A\lesssim B$ means that there exists $c>0$ such that $A\leq cB$. When $c$ is a small constant we use $A\ll B$. We write $A\sim B$ to denote the statement that $A\lesssim B\lesssim A$.
For $u=u(t,x,y,z)\in\S'(\R^4)$, we denote by $\widehat{u}$ (or $\F u)$ its Fourier transform in space. The Fourier variables corresponding to a vector $\bar{x} = (x,y,z)$ will be denoted by $\bar{\xi} = (\xi, \eta, \mu)$. We consider the usual Lebesgue spaces $L^p$, $1\le p\le \infty$ and given a Banach space $X$ and a measurable function $u:\R\to X$, we define $\|u\|_{L^pX} = \left\|\|u(t)\|_X\right\|_{L^p}$. For $T>0$, we also set $L^p_T = L^p([0,T])$. Let us define the Japanese bracket $\cro{\bar{x}}=(1+|\bar{x}|^2)^{1/2}$ so that the standard non-homogeneous Sobolev spaces are endowed with the norm $\|f\|_{H^s}=\|\cro{\nabla}^sf\|_{L^2}$.

We use a Littlewood-Paley analysis. Let $p \in C^\infty_0(\R^d)$ be such that $p\geq 0$, $\supp p\subset B(0,2)$, $p\equiv 1$ on $B(0,1)$. We define next $p_k(\bar{\xi})=p(\bar{\xi}/2^k)$ for $k\ge 0$.
We set $\delta(\bar{\xi})=p(\bar{\xi}/2)-p(\bar{\xi})$ and $\delta_k(\bar{\xi})=\delta(\bar{\xi}/2^k)$ for any $k\in\Z$, and define the operators
$P_k$ ($k\ge 0$) and $\Delta_k$ ($k\in\Z$) by $\F(P_ku)=p_k \widehat{u}$ and $\F(\Delta_k)=\delta_k\widehat{u}$. When $d=3$, we introduce the operators $P_k^x$, $P_k^y$, $P_k^z$, and $\Delta_k^x$, $\Delta_k^y$, $\Delta_k^z$ defined by
$$\left\{\begin{array}{lll}P_k^xu(x,y,z)=\F^{-1}(p_k(\xi)\widehat{u}(\xi,\eta,\mu)),\\ P_k^yu(x,y,z)=\F^{-1}(p_k(\eta)\widehat{u}(\xi,\eta,\mu)),\\ P_k^zu(x,y,z)=\F^{-1}(p_k(\mu)\widehat{u}(\xi,\eta,\mu))\end{array}\right.$$
and
$$\left\{\begin{array}{lll}\Delta_k^xu(x,y,z)=\F^{-1}(\delta_k(\xi)\widehat{u}(\xi,\eta,\mu)),\\ \Delta_k^yu(x,y,z)=\F^{-1}(\delta_k(\eta)\widehat{u}(\xi,\eta,\mu)),\\ \Delta_k^zu(x,y,z)=\F^{-1}(\delta_k(\mu)\widehat{u}(\xi,\eta,\mu))\end{array}\right.$$
Furthermore we define more general projections $P_{\lesssim k}=\sum_{j: 2^j\lesssim 2^k}P_j$, $\Delta_{\gg j}^x=\sum_{j: 2^j\gg 2k}\Delta_j^x$ etc.
For future considerations, note that for $u\in\S'(\R^3)$ and $p\in[1,\infty]$ we have
\begin{equation}
\|\Delta_ku\|_{L^p}\lesssim \|\Delta_k^xP_{\lesssim k}^yP_{\lesssim k}^zu\|_{L^p}+\|P_{\lesssim k}^x\Delta_k^yP_{\lesssim k}^zu\|_{L^p}+\|P_{\lesssim k}^xP_{\lesssim k}^y\Delta_k^zu\|_{L^p}.
\end{equation}

With these notations, it is well known that an equivalent norm on $H^s(\R^d)$ is given by
$$\|u\|_{H^s}\sim \|P_0u\|_{L^2}+\left(\sum_{k\ge 0}2^{2sk}\|\Delta_ku\|_{L^2}^2\right)^{1/2}.$$
For $s\in\R$, the Besov space $B^{s,1}_2(\R^d)$ denotes the completion of $\S(\R^d)$ with respect to the norm
$$\|u\|_{B^{s,1}_2} = \|P_0u\|_{L^2}+\sum_{k\ge 0}2^{sk}\|\Delta_ku\|_{L^2}.$$

\section{Linear estimates}\label{sec-lin}
Consider the linear ZK equation
\begin{equation}\label{linearZK}u_t+\Delta u_x=0,\quad t\in\R, x\in\R^3.\end{equation}
Let $\omega(\bar{\xi})=\xi(\xi^2+\eta^2+\mu^2)$ and $U(t)=\F^{-1}e^{it\omega(\bar{\xi})}\F$, be the associated linear operator.

First we prove a standard "Kato smoothing" estimate for the free evolution of \re{linearZK}.
\begin{proposition}\label{prop-smooth}
For any $\varphi\in L^2(\R^3)$, it holds that
\begin{equation}\label{smooth}\|\nabla U(t)\varphi\|_{L^\infty_{x}L^2_{yzt}}\lesssim \|\varphi\|_{L^2}.\end{equation}
\end{proposition}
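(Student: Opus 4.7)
The strategy is the classical Kato smoothing argument, adapted to three space dimensions by exploiting the fact that the phase $\omega(\bar{\xi})=\xi(\xi^2+\eta^2+\mu^2)$ is strictly monotone in $\xi$ at fixed $(\eta,\mu)$. I would fix $x\in\R$, view $\nabla U(t)\varphi(x,\cdot,\cdot,\cdot)$ as a function of $(y,z,t)$, and compute its $L^2_{yzt}$ norm by Plancherel in these three variables, checking that the resulting quantity is bounded by $\|\varphi\|_{L^2}$ uniformly in $x$.

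Concretely, starting from
\begin{equation*}
U(t)\varphi(x,y,z) = \int_{\R^3} e^{i(x\xi+y\eta+z\mu+t\omega(\bar{\xi}))}\widehat{\varphi}(\bar{\xi})\,d\bar{\xi},
\end{equation*}
the partial Fourier transform in $(y,z,t)$ forces the distribution to live on the characteristic surface $\tau=\omega(\bar{\xi})$. Because $\partial_\xi\omega=3\xi^2+\eta^2+\mu^2$ is strictly positive off the origin, for each fixed $(\eta,\mu,\tau)$ the equation $\tau=\omega(\xi,\eta,\mu)$ has a unique smooth solution $\xi=\xi(\tau,\eta,\mu)$, and the $\delta$-distribution can be resolved explicitly with density $(3\xi^2+\eta^2+\mu^2)^{-1}$. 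Substituting and doing the change of variable $\tau\leftrightarrow\xi$ at fixed $(\eta,\mu)$ (Jacobian $3\xi^2+\eta^2+\mu^2$) gives, for each component $\partial_j\in\{\partial_x,\partial_y,\partial_z\}$ with corresponding Fourier multiplier $\xi_j\in\{\xi,\eta,\mu\}$,
\begin{equation*}
\|\partial_j U(t)\varphi(x,\cdot)\|_{L^2_{yzt}}^2 \lesssim \int_{\R^3}\frac{\xi_j^2}{3\xi^2+\eta^2+\mu^2}\,|\widehat{\varphi}(\bar{\xi})|^2\,d\bar{\xi}.
\end{equation*}
The key feature is that the phase factor $e^{ix\xi(\tau,\eta,\mu)}$ has modulus $1$, so the right-hand side is independent of $x$, which disposes of the $L^\infty_x$ norm for free.

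Summing over $j=1,2,3$, the total multiplier becomes $(\xi^2+\eta^2+\mu^2)/(3\xi^2+\eta^2+\mu^2)$, which is bounded by $1$ pointwise, yielding $\|\nabla U(t)\varphi\|_{L^\infty_x L^2_{yzt}}^2\lesssim \|\varphi\|_{L^2}^2$. The only delicate point is justifying the use of the coarea/change-of-variable formula across the single point $\bar{\xi}=0$ where $\partial_\xi\omega$ vanishes, but this is a negligible set and can be handled by a routine approximation or by noting that the multiplier remains bounded in the limit. Since all other ingredients are elementary Plancherel and change-of-variable manipulations, I do not anticipate a genuine obstacle beyond this observation.
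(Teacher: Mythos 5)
Your proposal is correct and follows essentially the same route as the paper: a change of variables $\tau=\omega(\bar\xi)$ in $\xi$ at fixed $(\eta,\mu)$, Plancherel in $(y,z,t)$, uniformity in $x$ from the unimodular factor $e^{ix\xi(\tau,\eta,\mu)}$, and cancellation of the Jacobian $3\xi^2+\eta^2+\mu^2$ against the gradient multiplier. The paper phrases the computation as a direct substitution $\theta=h(\xi)$ rather than resolving a delta on the characteristic surface, but the two are identical.
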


\begin{proof}
The proof is modeled on the corresponding result for the KdV equation \cite{KPV1} (see also \cite{KPV2} and \cite{F} for the two-dimensional case). We perform the change of variables $\theta = h(\xi) = \omega(\bar{\xi})$ and obtain
\begin{align*}
 U(t)\varphi(\bar{x}) &= \int_{\R^3}e^{i(\bar{x}\bar{\xi}+t\omega(\bar{\xi})}\widehat{\varphi}(\bar{\xi})d\bar{\xi}\\
&= \F^{-1}_{\theta\eta\mu}\left(e^{ixh(\theta)}(h^{-1})'(\theta)\widehat{\varphi}(h(\theta),\eta,\mu)\right)(t,y,z).
\end{align*}
Therefore, applying Plancherel theorem and returning to the $\xi$-variable yield
\begin{align*}
 \|U(t)\varphi(x)\|_{L^2_{yzt}} &= \|(h^{-1})'(\theta)\widehat{\varphi}(h(\theta),\eta,\mu)\|_{L^2_{\theta\eta\mu}}\\
&= \||h'(\xi)|^{-1/2}\widehat{\varphi}(\xi,\eta,\mu)\|_{L^2_{\xi\eta\mu}}\\
&\sim \|\nabla^{-1}\varphi\|_{L^2}.
\end{align*}
\end{proof}

We will use in a crucial way the following maximal estimate for the free evolution when acting on phase localized functions.

\begin{proposition}\label{prop-max}
Let $0<T<1$ and $\alpha\ge 3/8$.
\begin{enumerate}
 \item For all $\varphi\in \S(\R^3)$ and $k\ge 0$, we have
\begin{equation}\label{maxEst3d}\|t^\alpha \Delta_k U(t)\varphi\|_{L^2_{x}L^\infty_{yzT}}\lesssim 2^k\|\Delta_k\varphi\|_{L^2}.\end{equation}
\item For any $\varphi\in\S(\R^3)$, it holds that
\begin{equation}
 \|P_0 U(t)\varphi\|_{L^2_xL^\infty_{yzT}}\lesssim \|P_0\varphi\|_{L^2}.
\end{equation}

\end{enumerate}
\end{proposition}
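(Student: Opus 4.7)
The proof splits naturally into the low-frequency and high-frequency cases.

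\emph{Part 2 (low frequencies).} Since $P_0$ has Fourier support in a bounded ball, Bernstein's inequality in $(y,z)$ gives, pointwise in $(x,t)$, the bound $\|P_0 U(t)\varphi(x,\cdot,\cdot)\|_{L^\infty_{yz}} \lesssim \|P_0 U(t)\varphi(x,\cdot,\cdot)\|_{L^2_{yz}}$. Combined with the $L^2$-conservation of $U(t)$, this already controls the $L^2_x L^\infty_{yz}$-norm pointwise in $t$. To upgrade to the full $L^2_x L^\infty_{yzT}$-norm, I would apply the fundamental theorem of calculus in $t$ to $g(t,x) := \|P_0 U(t)\varphi(x,\cdot,\cdot)\|_{L^2_{yz}}^2$. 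Using that $\partial_t(P_0 U\varphi) = -\omega(D)P_0 U\varphi$ with $\omega(D)P_0$ a bounded Fourier multiplier, together with integration by parts in $(y,z)$, reduces $\partial_t g$ to a total $x$-derivative of an energy density; the resulting $L^1_x$-bound on $\partial_t g$ is controlled by $\|P_0\varphi\|_{L^2}^2$.

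\emph{Part 1 (high frequencies).} The starting point is the tensor structure $\omega(\bar\xi) = \xi^3 + \xi(\eta^2 + \mu^2)$, which decouples the evolution, after partial Fourier transform in $x$, into a 1D Airy factor $e^{it\xi^3}$ and a 2D Schr\"odinger factor $e^{it\xi\Delta_{yz}}$ with ``time'' $t\xi$:
\[
\F_x[\Delta_k U(t)\varphi](\xi,y,z) = e^{it\xi^3}\bigl(e^{it\xi\Delta_{yz}} G_\xi\bigr)(y,z), \qquad G_\xi := \F^{-1}_{\eta\mu}(\delta_k\widehat\varphi)(\xi,\cdot,\cdot).
\]
Plancherel in $x$ then yields
\[
\|\Delta_k U(t)\varphi(\cdot,y,z)\|_{L^2_x}^2 \sim \int_\R \bigl|e^{it\xi\Delta_{yz}} G_\xi(y,z)\bigr|^2\,d\xi.
\]
Since the $(\eta,\mu)$-Fourier support of $G_\xi$ has size $\lesssim 2^k$ and is preserved by $e^{it\xi\Delta_{yz}}$, Bernstein in $(y,z)$ gives the pointwise bound $|e^{it\xi\Delta_{yz}}G_\xi(y,z)|\lesssim 2^k\|G_\xi\|_{L^2_{yz}}$, whence the pointwise-in-$t$ estimate
$\sup_{y,z}\|\Delta_k U(t)\varphi(\cdot,y,z)\|^2_{L^2_x}\lesssim 2^{2k}\|\Delta_k\varphi\|^2_{L^2}$.

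The remaining (and principal) task is to move the $\sup_t$ \emph{inside} the $L^2_x$-integral, which is strictly stronger than the estimate just obtained. This is the role of the weight $t^\alpha$. Applying the fundamental theorem of calculus as in Part~2 yields
\[
t^{2\alpha}|\Delta_k U(t)\varphi(\bar x)|^2 = \int_0^t \partial_s\bigl(s^{2\alpha}|\Delta_k U(s)\varphi(\bar x)|^2\bigr)\,ds,
\]
and one controls the two resulting terms separately: the ``low'' term $s^{2\alpha-1}|\Delta_k U|^2$ is handled by the pointwise bound just derived together with the integrability of $s^{2\alpha-1}$ on $[0,T]$ for $\alpha>0$; the ``high'' term $s^{2\alpha}|\Delta_k U|\,|\omega(D)\Delta_k U|$ requires Kato smoothing \eqref{smooth} to dispose of the derivative loss from $\omega(D)$ via an $L^\infty_x L^2_{yzt}$-estimate, combined with Bernstein in $(y,z)$ and a Cauchy--Schwarz in $s$. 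Balancing the scaling exponents in $2^k$ and $T$ pins down the threshold $\alpha\ge 3/8$.

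\emph{Main obstacle.} The principal technical challenge is the interaction between the competing scales $2^k$ from Bernstein in $(y,z)$ and the half-derivative gain from Kato smoothing in $x$; the weight $t^\alpha$ with critical exponent $3/8$ is exactly the device that balances these contributions so that the final bound carries only the factor $2^k$ on the right-hand side. This also explains why the present approach only succeeds at the endpoint $s=1$ after passage to the Besov space $B^{1,1}_2$, as announced in Theorem~\ref{th-besov}.
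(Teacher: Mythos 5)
Your Part 1 contains a gap that I do not believe can be repaired within the strategy you describe. After the fundamental theorem of calculus, the ``high'' term is $\int_0^t s^{2\alpha}|\Delta_kU(s)\varphi|\,|\omega(D)\Delta_kU(s)\varphi|\,ds$, and $\omega(D)\Delta_k$ costs three derivatives, i.e.\ a factor $2^{3k}$. The tools you invoke recover at most two of them: Kato smoothing \re{smooth} gains one derivative but delivers $\|\omega(D)\Delta_kU\varphi\|_{L^\infty_xL^2_{yzs}}\lesssim 2^{2k}\|\Delta_k\varphi\|_{L^2}$, and pairing this $L^\infty_x$ bound with the other factor forces that factor into $L^1_xL^2_{yzs}$, which is not controlled by $\|\Delta_k\varphi\|_{L^2}$; if instead you use Cauchy--Schwarz in $(x,s)$ with Bernstein in $(y,z)$ on both factors, you arrive at $2^{2k}\cdot T\cdot 2^{3k}\|\Delta_k\varphi\|_{L^2}^2$, an excess of $2^{3k}$ over the target $2^{2k}\|\Delta_k\varphi\|_{L^2}^2$. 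The weight $t^\alpha$ cannot absorb a loss in $2^k$ for fixed $T$: it only improves the behaviour as $t\to0$. In other words, the energy/FTC method yields a maximal estimate with a loss of roughly $3/2$ derivatives (worse than the known $s>9/8$ result), and no balancing of exponents produces the threshold $\alpha\ge3/8$; that number does not come from scaling. A related slip: your intermediate bound $\sup_{y,z,t}\|\Delta_kU(t)\varphi(\cdot,y,z)\|_{L^2_x}\lesssim 2^k\|\Delta_k\varphi\|_{L^2}$ has the supremum \emph{outside} the $L^2_x$ integral, and moving it inside is precisely the content of \re{maxEst3d}, so it cannot serve as an ingredient in its own proof (for the ``low'' term one can instead use Bernstein in $(y,z)$ at fixed $x$, which does give $\|\Delta_kU(t)\varphi\|_{L^2_xL^\infty_{yz}}\lesssim2^k\|\Delta_k\varphi\|_{L^2}$ pointwise in $t$; that part is fine).

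The paper's route is structurally different and is the standard one for maximal function estimates: a $TT^*$ (there written $AA^*$) argument with the operator $A_kg=\int_0^Tt^\alpha U_k(-t)g(t)\,dt$ reduces \re{maxEst3d} to the weighted kernel bound $\|t^{2\alpha}I_k\|_{L^1_xL^\infty_{yzT}}\lesssim2^{2k}$ of Lemma \ref{lem-ik}, which is then proved by writing out the $(y,z)$ Schr\"odinger kernel explicitly and running a stationary-phase/Van der Corput analysis over the regions $|x|\ll t2^{2k}$, $|x|\gg t2^{2k}$ and $|x|\sim t2^{2k}$; the exponent $3/8$ emerges from the decay rates in the resonant region. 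Your Part 2, by contrast, is essentially sound: for $P_0$ the multiplier $\omega(D)P_0$ is bounded on $L^2$, so the FTC argument closes by a crude Cauchy--Schwarz, giving an elementary alternative to the paper's oscillatory-integral Lemma \ref{lem-i0}. But the heart of the proposition is Part 1, and there you need the kernel estimate, not an energy identity.
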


Before proving Proposition \ref{prop-max}, we first show some estimates related to the oscillatory integrals:
$$I_0(t,\bar{x}) = \int_{\R^3}e^{i(\bar{x}\bar{\xi}+t\omega(\bar{\xi}))}p_0(\bar{\xi})d\bar{\xi},$$
and for $k\ge 1$,
$$I_k(t,\bar{x}) = \int_{\R^3}e^{i(\bar{x}\bar{\xi}+t\omega(\bar{\xi}))}\psi_1(\xi)\psi_2(\eta)\psi_3(\mu)d\bar{\xi}$$
where $(\psi_1, \psi_2, \psi_3) = (\delta_k, p_k, p_k)$, $(p_k, \delta_k, p_k)$ or $(p_k, p_k, \delta_k)$.

\begin{lemma}\label{lem-i0}
 $$\|I_0\|_{L^1_xL^\infty_{yzT}}\lesssim 1.$$
\end{lemma}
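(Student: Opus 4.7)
The plan is to exploit the compact support of $p_0$ together with the boundedness $|t|<1$ to reduce the estimate to standard non-stationary phase in the $\xi$ variable. The key observation is that, since everything is localized and $t$ is bounded, we should get decay in $x$ (the natural direction of the group velocity) that is more than enough to be integrable.

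First I would record the trivial $L^\infty$ bound $\|I_0\|_{L^\infty_{t,\bar{x}}} \lesssim \|p_0\|_{L^1} \lesssim 1$, which takes care of the region $|x|\lesssim 1$. The main work is to show rapid decay in $x$, uniform in $y,z$ and in $t\in[0,T]$. To that end, compute
$$\partial_\xi \Phi(\bar{\xi}) = x + t\bigl(3\xi^2+\eta^2+\mu^2\bigr),\qquad \Phi(\bar{\xi})=\bar{x}\bar{\xi}+t\omega(\bar{\xi}).$$
On $\supp p_0$ we have $|\bar{\xi}|\le 2$, so $|3\xi^2+\eta^2+\mu^2|\le C$, and for $|t|<T<1$ the second term is $O(1)$. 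Hence as soon as $|x|\ge M$ for a suitably large absolute constant $M$, one has $|\partial_\xi \Phi|\gtrsim |x|$ uniformly in $(t,\eta,\mu)$ on the relevant set.

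Then I would integrate by parts $N$ times in the $\xi$ variable, writing
$$e^{i\Phi}=\frac{1}{i\partial_\xi\Phi}\,\partial_\xi e^{i\Phi}.$$
Since $\partial_\xi^2\Phi=6t\xi$ and $\partial_\xi^3\Phi=6t$ are bounded while $\partial_\xi^j\Phi=0$ for $j\ge 4$, every derivative landing on $(\partial_\xi\Phi)^{-1}$ contributes an extra factor $\lesssim |x|^{-1}$, and derivatives landing on $p_0$ stay bounded with the same compact support. After $N$ such integrations one gets the pointwise bound
$$|I_0(t,\bar{x})|\lesssim |x|^{-N}\quad\text{for } |x|\ge M,$$
uniformly in $(y,z)$ and in $t\in[0,T]$. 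Combined with the trivial bound on $|x|\le M$, this yields $\sup_{t\in[0,T],\,y,z}|I_0(t,\bar{x})|\lesssim \langle x\rangle^{-N}$ for, say, $N=2$, which is integrable in $x$ and gives the claim.

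There is no real obstacle here: the compact support of $p_0$ and the smallness of $|t|$ together trivialize the oscillatory analysis, reducing it to non-stationary phase in a single variable. The only mild point worth stating carefully is that the constants in the integration by parts depend only on $\|p_0\|_{C^N}$ and on upper bounds for $|\partial_\xi^j\omega|$ on $\supp p_0$, both of which are $O(1)$; in particular nothing blows up as $t\to 0$.
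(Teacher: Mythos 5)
Your proof is correct and follows essentially the same route as the paper: a trivial bound for $|x|\lesssim 1$, then non-stationary phase in $\xi$ using $|\partial_\xi\Phi|\gtrsim|x|$ on $\supp p_0$ (valid since $|t|<1$ and $\bar\xi$ is bounded there), with two integrations by parts giving $|I_0|\lesssim|x|^{-2}$ and hence integrability in $x$.
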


\begin{proof}
Since $|I_0|\lesssim 1$, it is clear that $\|I_0\|_{L^1_xL^\infty_{yzT}}\lesssim 1$ if we are in the region $|x|\lesssim 1$.
Thus we may assume that $|x|\gg 1$. Define the phase function $\varphi_1$ by $\varphi_1(\xi) = t\omega(\bar{\xi})+x\xi$ so that $\varphi_1'(\xi) = t(3\xi^2+\eta^2+\mu^2)+x$. On the support of $p_0$, we have $|\varphi_1'|\gtrsim |x|$.
Thus, two integrations by parts yield the estimate
$$\left|\int_\R e^{\phi_1(\xi)}p_0(\bar{\xi})d\xi\right| \lesssim
\int_\R \left|\frac{p_{0\xi\xi}}{\varphi_1''}\right| + \left|\frac{p_{0\xi}\varphi_1''^2}{\varphi_1'^3}\right| + \left|\frac{p_0\varphi_1'''}{\varphi_1'^3}\right| + \left|\frac{p_0\varphi_1''^2}{\varphi_1'^4}\right| \lesssim |x|^{-2}.
$$
It follows that $|I_0|\lesssim |x|^{-2}$, which implies that $\|I_0\|_{L^1_xL^\infty_{yzT}}\lesssim 1$ as required.
\end{proof}

\begin{lemma}\label{lem-ik} For any $\alpha\ge 3/8$ and $k\ge 0$, it holds that
 $$\|t^{2\alpha} I_k\|_{L^1_xL^\infty_{yzT}}\lesssim 2^{2k}.$$
\end{lemma}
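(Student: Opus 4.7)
My plan is to obtain pointwise bounds on $|I_k(t,\bar x)|$ via a case analysis in $(t,\bar x)$-space, and then integrate. The phase is $\Phi(\bar\xi)=\bar x\cdot\bar\xi+t\omega(\bar\xi)$ with $\partial_\xi\Phi=x+t(3\xi^2+\eta^2+\mu^2)$, so the central dichotomy is whether $|x|$ dominates $t2^{2k}$ or not: in the first case the phase is non-stationary in $\xi$ and integration by parts (as in Lemma \ref{lem-i0}) gives fast decay in $|x|$; in the second case the phase admits critical points and van der Corput must be used. The weight $t^{2\alpha}$ with $\alpha\ge 3/8$ is calibrated precisely to absorb the residual $t^{-1/2}$ singularity coming from stationary phase when integrated over the wave-packet region.

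\emph{Non-stationary regime}, $|x|\gtrsim t2^{2k}+2^{-k}$. On $\supp\psi$ we have $|\partial_\xi\Phi|\gtrsim|x|$, $|\partial_\xi^j\psi|\lesssim 2^{-jk}$ and $|\partial_\xi^2\Phi|\lesssim t2^k$. Integrating by parts $N$ times in $\xi$ yields $|I_k(t,\bar x)|\lesssim 2^{3k}(1+2^k|x|)^{-N}$, uniformly in $t,y,z$. Adding the trivial bound $|I_k|\lesssim 2^{3k}$ on the ball $|x|\lesssim 2^{-k}$, this region contributes $\lesssim 2^{2k}$ to the $L^1_xL^\infty_{yzT}$ norm; the factor $t^{2\alpha}\le 1$ is absorbed.

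\emph{Stationary regime}, $|x|\lesssim t2^{2k}$ and $t2^k\gtrsim 1$. Here I exploit that $\omega=\xi^3+\xi\eta^2+\xi\mu^2$ is quadratic in $\eta$ and in $\mu$. Completing the square in each variable and applying van der Corput (with $|\phi''|=2t|\xi|\sim t2^k$) bounds the inner $\eta$- and $\mu$-integrals by $(t2^k)^{-1/2}$ each, uniformly in $\bar x$. The outer $\xi$-integral then has smooth amplitude and phase with second derivative $6t|\xi|\sim t2^k$, so van der Corput once more yields a further factor $(t2^k)^{-1/2}$. Hence $|I_k(t,\bar x)|\lesssim (t2^k)^{-3/2}$, and the $L^1_x$ contribution over this region, of measure $\sim t2^{2k}$, is
$$t^{2\alpha}\cdot(t2^k)^{-3/2}\cdot t2^{2k}=t^{2\alpha-1/2}\,2^{k/2}\lesssim 2^{k/2}\le 2^{2k},$$
using $\alpha\ge 3/8\Rightarrow 2\alpha-1/2\ge 1/4\ge 0$ and $t\le T<1$. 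The small-time pocket $t2^k\lesssim 1$ is either absorbed by the non-stationary bound (which covers $|x|\gtrsim 2^{-k}$ as soon as $t\le 2^{-3k}$) or handled trivially on $|x|\lesssim 2^{-k}$, delivering $\lesssim 2^{2k}$ in either case.

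The main technical obstacle lies in the stationary regime: the three successive van der Corput estimates must close up, which is delicate because after completing the square, the amplitude of the outer $\xi$-integral is itself an oscillatory function of $\xi$ whose derivatives involve factors $t\eta^2\sim t2^{2k}$. These must be absorbed without spoiling the $(t2^k)^{-1/2}$ gain, which forces a careful bookkeeping of the van der Corput constants using the smoothness and compact support of $\psi_2$ and $\psi_3$. The exponent $\alpha=3/8$ is sharp in this step: it is precisely the threshold for $t^{2\alpha-1/2}$ to remain bounded when integrating $(t2^k)^{-3/2}$ over a wave-packet region of measure $t2^{2k}$.
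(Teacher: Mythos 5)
Your overall architecture --- the dichotomy $|x|$ versus $t2^{2k}$, integration by parts off the stationary set, oscillatory estimates plus the weight $t^{2\alpha}$ on it --- is the same as the paper's, and your non-stationary regime is essentially correct (modulo the fact that the transition region $|x|\sim t2^{2k}$, where $\varphi_1'=x+t(3\xi^2+\eta^2+\mu^2)$ can vanish, must be excluded from the integration-by-parts case and assigned to the stationary analysis). The gap is in the stationary regime, which is the actual content of the lemma. Your pointwise bound $|I_k|\lesssim (t2^k)^{-3/2}$ rests on the assertion that the $\eta$- and $\mu$-phases have second derivative $2t|\xi|\sim t2^k$; but in two of the three configurations $(\psi_1,\psi_2,\psi_3)$ one has $\psi_1=p_k$, so $\xi$ ranges all the way down to $0$ and the quadratic phase in $(\eta,\mu)$ degenerates. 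This is precisely why the paper splits off the piece $I_k^2$ supported in $|\xi|\lesssim 1$, decomposes it further as $p_0=p_{-2k}+\sum_{j=-2k}^{0}\delta_j$, and runs a separate case analysis for each $j$ (using, among other things, the bound $\int|\check{\psi_2}(y-u)||u|^{-1/2}du\lesssim 2^{k/2}$); none of this is visible in your argument.

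Even on the piece where $|\xi|\gtrsim 1$, the third van der Corput application does not deliver $(t2^k)^{-1/2}$. If you integrate out $\eta,\mu$ exactly (the Fresnel identity, as the paper does), the residual $\xi$-phase is $\varphi_2(\xi)=t\xi^3+x\xi-\frac{u^2+v^2}{4t\xi}$, whose second derivative $6t\xi-\frac{u^2+v^2}{2t\xi^3}$ vanishes on a cone (the degenerate directions of the Hessian of $\omega$); there only $|\varphi_2'''|\ge 6t$ survives, which is why the paper obtains $|I_k^1|\lesssim t^{-4/3}$ and not $(t2^k)^{-3/2}$. If instead you keep the cutoffs and treat the $(\eta,\mu)$-integral as an amplitude for the $\xi$-integral, its $\xi$-derivative carries a factor $t(\eta^2+\mu^2)\sim t2^{2k}$, so the amplitude's variation is of size $2^k$ times its sup and the van der Corput constant is destroyed. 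You identify this exact obstruction in your closing paragraph but defer it to ``careful bookkeeping''; that bookkeeping is the proof, and as stated your claimed bound is not only unproved but false in general. The arithmetic showing that $\alpha\ge 3/8$ suffices is therefore resting on a pointwise estimate that does not hold; the correct threshold emerges in the paper from the weaker, $j$-dependent bounds $t^{3/4}|I_{k,j}^2|\lesssim |x|^{-3/4}2^{-j/2}2^{k/2}$ and $t^{2\alpha}|I_k^2|\lesssim |x|^{2\alpha-3/2}2^{-2k(2\alpha-3/2)}$, summed over $-2k\le j\le 0$.
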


\begin{proof}
We split  $I_k$ into
\begin{align}\notag
I_k &= \int_{\R^3}e^{i(\bar{x}\bar{\xi}+t\omega(\bar{\xi}))}\psi_1(\xi)(1-p_0(\xi))\psi_2(\eta)\psi_3(\mu)d\bar{\xi} \\ \notag &\quad + \int_{\R^3}e^{i(\bar{x}\bar{\xi}+t\omega(\bar{\xi}))}p_0(\xi)\psi_2(\eta)\psi_3(\mu)d\bar{\xi}\\
&:= I_k^1+I_k^2.
\label{eq-iksplit}
\end{align}

\begin{itemize}
\item
Estimate for $I_k^1$.\\
Since  we have $|\xi|\gtrsim 1$, a rough estimate for $I_k^1$ yields $|I_k^1|\lesssim 2^{3k}$, which gives the desired bound in the region where $|x|\le 2^{-k}$.
Therefore we may assume $|x|\ge 2^{-k}$. If we have either $|x|\ll t2^{2k}$ or $|x|\gg t2^{2k}$, then using that $|\omega'|\sim 2^{2k}$ we infer $|\varphi_1'|\gtrsim \max(|x|, t2^{2k})$ where
$\varphi_1$ is the phase function \begin{equation}\label{phi1}\varphi_1(\xi) = t\omega(\bar{\xi})+x\xi.\end{equation} Integrating by parts twice with respect to $\xi$ we deduce
$$\left|\int_\R e^{i\varphi_1}\psi_1(1-p_0)\right|\lesssim \max(|x|, t2^{2k})^{-2}.$$
It follows that $|I_k^1|\lesssim 2^{2k}\max(|x|, t2^{2k})^{-2}$ and next
$$t^{1/2}|I_k^1|\lesssim |x|^{-3/2}2^k,$$
which is acceptable since we integrate in the region $|x|\ge 2^{-k}$. Now we consider the case $|x|\sim t2^{2k}$. Using that
$$\int_{\R^2}e^{it\xi(\eta^2+\mu^2)+iy\eta+iz\mu}d\eta d\mu = \frac{\pi}{t|\xi|}e^{-i\frac{y^2+z^2}{4t\xi}}e^{i\frac \pi 2 \sgn(\xi)},$$
we may rewrite $I_k^1$ as
\begin{equation}\label{ik}I_k^1 = \int_{\R^2}\check{\psi_2}(y-u)\check{\psi_3}(z-v)\left(\int_\R\frac{\pi i}{t\xi}e^{i\varphi_2(\xi)}\psi_1(\xi)(1-p_0(\xi))d\xi\right)dudv\end{equation}
where we set $\varphi_2(\xi)=t\xi^3+x\xi-\frac{u^2+v^2}{4t\xi}$. Since $|\varphi_2'''|\ge 6t$ on the support of $1-p_0$, Van der Corput's lemma implies that
$$\left|\int_\R\frac{\pi i}{t\xi}e^{i\varphi_2(\xi)}\psi_1(\xi)(1-p_0(\xi))d\xi\right|\lesssim t^{-4/3}.$$
It follows that $|I_k^1|\lesssim t^{-4/3}$ and
$$t^{2\alpha}|I_k^1|\lesssim t^{2\alpha-4/3}\lesssim |x|^{2\alpha-4/3}2^{-2k(2\alpha-4/3)}.$$
Hence we obtain for $\alpha> 1/6$ and $|x|\ge 2^{-k}$ that
$$\|t^{2\alpha}I_k^1\|_{L^1_xL^\infty_{yzT}}\lesssim 2^{2k(2\alpha-1/3)}2^{-2k(2\alpha-4/3)}\sim 2^{2k}.$$

\item
Estimate for $I_k^2$.
\\We treat now the low frequencies term $I_k^2$. The case $|x|\lesssim 1$ is easily handled since we have the rough estimate $|I_k^2|\lesssim 2^{2k}$. Thus we only need to consider the region where $|x|\gg 1$. In the domain $|x|\ll t2^{2k}$ or $|x|\gg t2^{2k}$, we have $|\varphi_1'|\gtrsim \max(|x|, t2^{2k})$ where $\varphi_1$ is defined in (\ref{phi1}), and thus
$$\left|\int_\R e^{i\varphi_1}p_0\right|\lesssim |x|^{-2}.$$
It follows that $|I_k^2|\lesssim 2^{2k}|x|^{-2}$ and
$$\|I_k^2\|_{L^1_xL^\infty_{yzT}}\lesssim 2^{2k}.$$
Now we consider the most delicate case $|x|\sim t2^{2k}$ and rewrite $I_k^2$ as in (\ref{ik}) where $\psi_1(1-p_0)$ is replaced with $p_0$. Let us split $p_0$ as $$p_0=p_{-2k}+\sum_{j=-2k}^0\delta_j.$$
The part $p_{-2k}$ is straightforward since $|I_k^2|\lesssim 1$ and we get from $|x|\sim t2^{2k}\lesssim 2^{2k}$ that $\|I_k^2\|_{L^1_xL^\infty_{yzT}}\lesssim 2^{2k}$. Thus we reduce to estimate
$$I_{k,j}^2 = \int_{\R^2}\check{\psi_2}(y-u)\check{\psi_3}(z-v)\left(\int_\R\frac{\pi i}{t\xi}e^{i\varphi_2(\xi)}\delta_j(\xi)d\xi\right)dudv$$
for $j=-2k,\ldots, 0$. First consider the case $|x|\ll t^{-1}2^{-2j}(u^2+v^2)$ or $|x|\gg t^{-1}2^{-2j}(u^2+v^2)$. Since $\varphi_2'(\xi)=3t\xi^2+x+\frac{u^2+v^2}{4t\xi^2}$, we have $|\varphi_2'|\gtrsim\max(|x|, t^{-1}2^{-2j}(u^2+v^2))$ and an application of the Van der Corput's lemma yields
\begin{align*}
|I_{k,j}^2| &\lesssim \int_{\R^2}|\check{\psi_2}(y-u)\check{\psi_3}(z-v)|t^{-1}|x|^{-3/4}(t^{-1}2^{-2j}(u^2+v^2))^{-1/4}2^{-j}dudv\\
&\lesssim |x|^{-3/4}t^{-3/4}2^{-j/2}\int_\R \frac{|\check{\psi_2}(y-u)|}{|u|^{1/2}}du.
\end{align*}
On the other hand, the change of variables $v=2^ku$ leads to
\begin{align*}
\int_\R \frac{|\check{\psi_2}(y-u)|}{|u|^{1/2}}du &= 2^k\int_\R\frac{|\check{\psi}(2^ky-2^ku)|}{|u|^{1/2}}du\\
&= 2^{k/2}\int_\R \frac{|\check{\psi}(2^ky-v)|}{|v|^{1/2}}dv\\
&\lesssim 2^{k/2}\int_{|v|\le 1}\frac{dv}{|v|^{1/2}}+2^{k/2}\int_{|v|\ge 1}|\check{\psi}(2^ky-v)|dv\\
&\lesssim 2^{k/2}.
\end{align*}
Consequently, it is deduced that $t^{3/4}|I_{k,j}^2|\lesssim |x|^{-3/4}2^{-j/2}2^{k/2}$ and
$$\|t^{3/4}I_k^2\|_{L^1_xL^\infty_{yzT}}\lesssim \sum_{j=-2k}^0 2^{-j/2}2^{k/2}\int_{|x|\lesssim 2^{2k}}\frac{dx}{|x|^{3/4}}\lesssim 2^{2k}.$$
Finally assume that $|x|\sim t^{-1}2^{-2j}(u^2+v^2)$ so that $|\varphi_2''|\gtrsim t2^{2k-j}$. Then we get from Van der Corput's lemma that
$$|I_{k,j}^2|\lesssim (t2^{2k-j})^{-1/2}t^{-1}2^{-j}\sim t^{-3/2}2^{-k}2^{-j/2}.$$
Hence, we obtain
$$t^{2\alpha}|I_k^2|\lesssim t^{2\alpha-3/2}2^{-k}\sum_{j=-2k}^02^{j/2}\lesssim |x|^{2\alpha-3/2}2^{-2k(2\alpha-3/2)},$$
which is acceptable as soon as $\alpha>1/4$. This concludes the proof of Lemma \ref{lem-ik}.
\end{itemize}
\end{proof}

We are now in a position to prove Proposition \ref{prop-max}.

\begin{proof}[Proof of Proposition \ref{prop-max}]
 Let $k\ge 0$ and $U_k(t) = \Delta_kU(t)$. The proof will follow from a slight modification of the usual $AA^*$ argument.
Let us define the operator $A_k : L^1_TL^2_{\bar{x}}\to L^2$ by
$$A_kg = \int_0^Tt^\alpha U_k(-t)g(t)dt.$$
We easily check that $A_k^*h(t) = t^\alpha U_k(t)h$ for $h\in L^2(\R^3)$ and moreover,
$$A_k^*A_kg(t) = \int_0^T(tt')^\alpha U_k(t-t')g(t')dt'.$$
The previous integrand can be estimated by
\begin{align*}
|(tt')^\alpha U_k(t-t')g(t',\bar{x})| &\lesssim \big||t-t'|^{2\alpha} U_k(t-t')g(t',\bar{x})\big| + \big||t+t'|^{2\alpha} U_k(t-t')g(t',\bar{x})\big|\\
& := I+II.
\end{align*}
Using the Young inequality, the first term is bounded by
\begin{align*}
 I &\lesssim \left|\big(|t-t'|^{2\alpha}I_k(t-t') \ast_{\bar{x}}g(t')\big)(\bar{x})\right|\\
& \lesssim \left((|t-t'|^{2\alpha}\|I_k(t-t')\|_{L^\infty_{yz}})\ast_x \|g(t')\|_{L^1_{yz}}\right)(x).
\end{align*}
Integrating this into $t'\in[0,T]$ and taking the $L^2_xL^\infty_{yzT}$ norm, this leads to
\begin{align}\notag
 \left\|\int_0^T \big||t-t'|^{2\alpha} U_k(t-t')g(t',\bar{x})\big| dt'\right\|_{L^2_xL^\infty_{yzT}} &\lesssim
\left\|\|t^{2\alpha} I_k\|_{L^\infty_{yzT}} \ast_x \|g(t)\|_{L^1_{yzT}}\right\|_{L^2_x}\\
\label{est-aa1} &\lesssim \|t^{2\alpha}I_k\|_{L^1_xL^\infty_{yzT}}\|g\|_{L^2_xL^1_{yzT}}.
\end{align}
To estimate $II$, we introduce $\check{g}(t,\bar{x}) = g(t,-\bar{x})$ and notice that
$$U_k(t-t')g(t',\bar{x}) = U_k(t+t')\check{g}(t', -\bar{x}).$$
We infer that
$$
II \lesssim \left((|t+t'|^{2\alpha})\|I_k(t+t')\|_{L^\infty_{yz}}) \ast_x \|\check{g}(t')\|_{L^1_{yz}}\right)(x)
$$
and we get
\begin{align}\notag
\left\|\int_0^T\big||t+t'|^{2\alpha}U_k(t-t')g(t',\bar{x})dt'\right\|_{L^2_xL^\infty_{yzT}} &\lesssim \left\|\|t^{2\alpha}I_k\|_{L^\infty_{yzT}}\ast_x\|\check{g}\|_{L^1_{yzT}}\right\|_{L^2_x} \\
\label{est-aa2} & \lesssim \|t^{2\alpha}I_k\|_{L^1_xL^\infty_{yzT}} \|g\|_{L^2_xL^\infty_{yzT}}.
\end{align}
Combining estimates (\ref{est-aa1})-(\ref{est-aa2}) and Lemma \ref{lem-ik} we deduce
$$\|A_k^*A_kg\|_{L^2_xL^\infty_{yzT}} \lesssim \|t^{2\alpha}I_k\|_{L^1_xL^\infty_{yzT}} \|g\|_{L^2_xL^{yzT}} \lesssim 2^{2k}\|g\|_{L^2_xL^\infty_{yzT}}.$$
The usual algebraic lemma (see Lemma 2.1 in \cite{GV}) applies and yields the first estimate in Proposition \ref{prop-max}. The second one is obtained by following the same lines and using Lemma \ref{lem-i0}.
\end{proof}

In order to get the desired bounds for data in $H^s(\R^3)$, $s>1$, we will use the following estimate.
\begin{proposition}\label{prop-maxHs}
For $0<T<1$, $s>1$ and any $\varphi\in\S(\R^3)$, it holds that
\begin{equation}\label{est-linhs}
 \|U(t)\varphi\|_{L^2_xL^\infty_{yzT}}\lesssim \|\varphi\|_{H^s}.
\end{equation}
\end{proposition}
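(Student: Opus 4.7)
The plan is to perform a Littlewood--Paley decomposition $\varphi=P_0\varphi+\sum_{k\ge 0}\Delta_k\varphi$ and estimate each piece separately. The $P_0$ piece is handled directly by Proposition~\ref{prop-max}(2). For $k\ge 0$, the target frequency-localized bound is
\[
\|\Delta_k U(t)\varphi\|_{L^2_xL^\infty_{yzT}}\lesssim 2^k\|\Delta_k\varphi\|_{L^2},
\]
after which Cauchy--Schwarz in $k$,
\[
\sum_{k\ge 0}2^k\|\Delta_k\varphi\|_{L^2}\le\Bigl(\sum_{k\ge 0}2^{2k(1-s)}\Bigr)^{1/2}\Bigl(\sum_{k\ge 0}2^{2ks}\|\Delta_k\varphi\|_{L^2}^2\Bigr)^{1/2}\lesssim\|\varphi\|_{H^s},
\]
delivers the proposition precisely because $s>1$ makes the first factor finite.

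The difficulty in extracting that frequency bound from Proposition~\ref{prop-max}(1) is the weight $t^\alpha$, which degenerates at $t=0$ and hence cannot be discarded directly on $[0,T]$. I would bypass this via a time-translation. Fix $t_0=(1+T)/2\in(T,1)$ and use the group property to write
\[
\Delta_k U(t)\varphi=\Delta_kU(-\tau)\tilde\varphi,\qquad\tilde\varphi:=U(t_0)\varphi,\quad\tau:=t_0-t.
\]
As $t$ ranges over $[0,T]$, the new variable $\tau$ stays in $[t_0-T,\,t_0]$, in particular bounded below by $(1-T)/2>0$; and $t_0<1$ remains in the admissible range of Proposition~\ref{prop-max}(1).

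The weighted maximal estimate applies equally well to the backward propagator $U(-\tau)$: the oscillatory-integral and $TT^*$ computations behind Lemma~\ref{lem-ik} and Proposition~\ref{prop-max}(1) only depend on the magnitudes of the derivatives of the phase $t\omega(\bar\xi)$, which are unchanged under $t\mapsto -t$, so the proofs carry over verbatim. Combined with the $L^2$-isometry $\|\Delta_k\tilde\varphi\|_{L^2}=\|\Delta_k\varphi\|_{L^2}$ (and the commutation of $\Delta_k$ with $U$), this yields
\[
\|\tau^\alpha\Delta_kU(-\tau)\tilde\varphi\|_{L^2_xL^\infty_{yz,[0,t_0]}}\lesssim 2^k\|\Delta_k\varphi\|_{L^2}.
\]
Restricting the supremum to the sub-interval $\tau\in[t_0-T,t_0]$ and pulling out $\tau^{-\alpha}\le((1-T)/2)^{-\alpha}$ there absorbs the weight into a constant depending only on $T$, producing the required bound and completing the argument after the Cauchy--Schwarz sum.

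The main obstacle is therefore the degeneracy of the weight $t^\alpha$ at $t=0$; the decisive trick is to combine the time-translation with the $L^2$-isometry of $U(t_0)$, which transforms the weighted maximal estimate into an unweighted one on any compact sub-interval $[0,T]\subsetneq[0,1)$, at the price of a constant blowing up like $(1-T)^{-\alpha}$ as $T\to 1$.
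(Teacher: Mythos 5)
Your argument is correct, but it is genuinely different from the paper's. The paper does not try to remove the weight from Proposition~\ref{prop-max}: instead it proves a new \emph{unweighted} oscillatory-integral bound $\|I_k\|_{L^1_xL^\infty_{yzT}}\lesssim 2^{(2+\eps)k}$ (via a finer decomposition into $\Delta_i^x\Delta_j^y\Delta_k^z$ pieces and direct stationary-phase estimates), runs the same $AA^*$ scheme to get $\|\Delta_kU(t)\varphi\|_{L^2_xL^\infty_{yzT}}\lesssim 2^{(1+\eps/2)k}\|\Delta_k\varphi\|_{L^2}$, and absorbs both the $\eps$-loss and the $\ell^2$-summation using $s>1$. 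You instead black-box the weighted estimate and strip the weight by conjugating with $U(t_0)$, $t_0=(1+T)/2$, exploiting the group property, the $L^2$-isometry and the positivity of $\tau$ on $[t_0-T,t_0]$. This is sound; the one point you gloss over is why Proposition~\ref{prop-max}(1) holds for the backward propagator, and the clean justification is the reflection identity $U(-t)=RU(t)R$ with $R\psi(\bar x)=\psi(-\bar x)$, valid because $\omega$ is odd, combined with the invariance of the $L^2_xL^\infty_{yzT}$ norm and of $\|\Delta_k\cdot\|_{L^2}$ under $R$ --- your appeal to ``the magnitudes of the phase derivatives'' is morally the same but less precise. The trade-off between the two proofs: the paper's constant is uniform over $0<T\le 1$, while yours degenerates like $(1-T)^{-\alpha}$ as $T\to 1$ (harmless for the local well-posedness application, where $T$ is taken small, and consistent with the paper's statement since $T$ is fixed before the implicit constant); on the other hand your route avoids any new oscillatory-integral work and actually delivers the sharper frequency-localized bound $2^k\|\Delta_k\varphi\|_{L^2}$ with no $\eps$-loss, hence the stronger conclusion $\|U(t)\varphi\|_{L^2_xL^\infty_{yzT}}\lesssim_T\|\varphi\|_{B^{1,1}_2}$, which still matches the necessary condition $s\ge 1$ established in the paper's sharpness proposition.
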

This proposition is a direct consequence of Lemma \ref{lem-i0} together with the following result.
\begin{lemma} For any $\eps>0$ and $k\ge 0$, it holds that
 $$\|I_k\|_{L^1_xL^\infty_{yzT}}\lesssim 2^{(2+\eps)k}.$$
\end{lemma}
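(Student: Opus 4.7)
The plan is to adapt the proof of Lemma \ref{lem-ik} to the case without the weight $t^{2\alpha}$, accepting a polynomial-in-$k$ loss that is absorbed into the factor $2^{\eps k}$. As in Lemma \ref{lem-ik}, I split $I_k = I_k^1 + I_k^2$ according to (\ref{eq-iksplit}) and examine the three regions $|x|\lesssim 2^{-k}$, $|x|\not\sim t 2^{2k}$ (the ``non-resonant'' region), and $|x|\sim t 2^{2k}$ (the ``resonant'' region).

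In the trivial region the Fourier-volume bounds $|I_k^1|\lesssim 2^{3k}$ and $|I_k^2|\lesssim 2^{2k}$, multiplied by $|\{|x|\lesssim 2^{-k}\}|\sim 2^{-k}$, already give a contribution $\lesssim 2^{2k}$. In the non-resonant region, $|\varphi_1'|\gtrsim\max(|x|,t2^{2k})$ with $\varphi_1$ as in (\ref{phi1}), so integration by parts in $\xi$ applies; three integrations by parts for $I_k^1$ (exploiting the factor $2^{-k}$ saved by each derivative of the amplitude $\psi_1$) and two for $I_k^2$ (whose $\xi$-amplitude $p_0$ has no $k$-dependent smoothness) yield, after taking the minimum with the trivial bound, $\sup_t |I_k^1|\lesssim\min(2^{3k},|x|^{-3})$ and $\sup_t |I_k^2|\lesssim\min(2^{2k},2^{2k}|x|^{-2})$. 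Each integrates over $|x|\gtrsim 2^{-k}$ to $\lesssim 2^{2k}$.

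The delicate point is the resonant region, where $2^{-k}\lesssim|x|\lesssim 2^{2k}$ and $t\sim|x|2^{-2k}$. Starting from the representation (\ref{ik}) obtained by the Gaussian integration in $(\eta,\mu)$, the key observation is that since $t\le T<1$, the crude $L^1$-in-$\xi$ bound
$$\left|\int_\R\frac{\pi i}{t\xi}e^{i\varphi_2(\xi)}\chi(\xi)d\xi\right|\le \frac{1}{t}\int_\R\frac{|\chi(\xi)|}{|\xi|}d\xi$$
is actually sharper than the Van der Corput estimate $t^{-4/3}$ invoked in Lemma \ref{lem-ik}. Here $\chi=\psi_1(1-p_0)$ for $I_k^1$, while for $I_k^2$ the same bound is applied with $\chi=\delta_j$ to each summand of the dyadic decomposition $p_0=p_{-2k}+\sum_{j=-2k}^0\delta_j$ (the very-low-frequency piece $p_{-2k}$ contributes $O(1)$ by a volume bound). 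The $\xi$-integral is at worst of logarithmic size, $\int|\chi|/|\xi|d\xi\lesssim k$, and summing the $O(k)$ pieces gives $|I_k|\lesssim k/t$; the convolution against $\check\psi_2\check\psi_3\in L^1$ preserves this bound. Substituting $t\sim|x|2^{-2k}$ gives $|I_k|\lesssim k\cdot 2^{2k}/|x|$, and integration over $|x|\in[2^{-k},2^{2k}]$ produces the logarithm $\log(2^{3k})\sim k$, so the resonant contribution is $\lesssim k^2\cdot 2^{2k}$. Summing the three regions, $\|I_k\|_{L^1_xL^\infty_{yzT}}\lesssim (1+k^2)\cdot 2^{2k}\lesssim 2^{(2+\eps)k}$ for any $\eps>0$. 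The main technical obstacle is to track amplitude-derivative savings in the non-resonant region carefully so that three integrations by parts produce $|x|^{-3}$ (instead of the $2^{2k}|x|^{-2}$ used in Lemma \ref{lem-ik}, which without the time weight would integrate to $2^{3k}$).
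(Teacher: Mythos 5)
Your proposal is correct in substance and follows essentially the same three-region strategy as the paper: trivial volume bounds for small $|x|$, non-stationary phase in the region $|x|\not\sim t2^{2k}$, and --- this is the key point you correctly identify --- replacing the Van der Corput bound of Lemma \ref{lem-ik} by the crude $t^{-1}\lesssim 2^{2k}|x|^{-1}$ bound from the explicit Gaussian formula in the resonant region, accepting polynomial-in-$k$ losses that $2^{\eps k}$ absorbs. The paper organizes this differently: it decomposes $\Delta_n$ into the triple dyadic pieces $I_{i,j,k}=\int e^{i(\bar x\bar\xi+t\omega)}\delta_i(\xi)\delta_j(\eta)\delta_k(\mu)$ and proves $\|I_{i,j,k}\|_{L^1_xL^\infty_{yzT}}\lesssim(1+M)2^{2M}$ with $M=\max(i,j,k)$, the polynomial loss then coming from the number of pieces and the logarithm in the resonant $x$-integration. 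The advantage of that finer decomposition is precisely at the point where your argument is slightly imprecise: in your non-resonant estimate for $I_k^1$ you claim $\sup_t|I_k^1|\lesssim\min(2^{3k},|x|^{-3})$ by letting each $\xi$-derivative of the amplitude save $2^{-k}$, but the amplitude is $\psi_1(1-p_0)$ and when $\psi_1=p_k$ (i.e.\ when $\delta_k$ sits on $\eta$ or $\mu$) the derivatives of $1-p_0$ near $|\xi|\sim 1$ are only $O(1)$, so that transition piece yields $2^{2k}|x|^{-3}$ rather than $|x|^{-3}$. This is not a real gap --- the transition piece has $\xi$-support of measure $O(1)$, hence trivial bound $2^{2k}$, so one uses the threshold $|x|\sim 1$ rather than $|x|\sim 2^{-k}$ for it and still integrates to $2^{2k}$ --- but you should either record this extra splitting or adopt the paper's full dyadic decomposition in $\xi$, which makes all amplitude-derivative gains uniform.
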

\begin{proof}
Setting
$$I_{i,j,k}(t,\bar{x}) = \int_{\R^3}e^{i(\bar{x}\bar{\xi}+t\omega(\bar{\xi}))}\delta_i(\xi)\delta_j(\eta)\delta_k(\mu)d\bar{\xi},$$
we see that from the estimate
$$\|\Delta_n u\|_{L^1_xL^\infty_{yzT}}\lesssim \sum_{i,j,k: 2^i,2^j,2^k\lesssim 2^n}\|\Delta_i^x\Delta_j^y\Delta_k^zu\|_{L^1_xL^\infty_{yzT}},$$
it suffices to show that
\begin{equation}\label{est-Iijk}
\|I_{i,j,k}\|_{L^1_xL^\infty_{yzT}}\lesssim (1+M)2^{2M}
\end{equation}
for all $i,j,k\in\Z$ and with $M=\max(i,j,k)\ge 0$. From the straightforward bound
$|I_{i,j,k}|\lesssim 2^{i+j+k}$, we see that we may assume $|x|\ge 2^{-m}$ where $m=\min(i,j,k)$. In the region $|x|\ll t2^{2M}$ or $|x|\gg t2^{2M}$, the phase function
$\varphi_1$ defined in \re{phi1} satisfies $|\varphi_1'|\gtrsim\max(|x|, t2^{2M})$ and integrations by parts lead to
$$\left|\int_\R e^{i\varphi_1}\delta_i\right|\lesssim 2^{-i}|x|^{-2}.$$
Therefore we get $|I_{i,j,k}|\lesssim 2^{j+k-i}|x|^{-2}$ and then $\|I_{i,j,k}\|_{L^1_xL^\infty_{yzT}}\lesssim 2^{j+k-i+m}\lesssim 2^{2M}$.
Finally consider the case $|x|\sim t2^{2M}$. Rewriting $I_{i,k,j}$ as
$$I_{i,j,k} = \int_{\R^2}\check{\delta_j}(y-u)\check{\delta_k}(z-v)\left(\int_\R \frac{\pi i}{t\xi}e^{i\varphi_2(\xi)}\delta_i(\xi)d\xi\right)dudv,$$
we immediately obtain $|I_{i,j,k}|\lesssim t^{-1}\lesssim 2^{2M}|x|^{-1}$ and thus $\|I_{i,j,k}\|_{L^1_xL^\infty_{yzT}}\lesssim M2^{2M}$, as desired.

\end{proof}

Up to the end point $s=1$, we show next that estimate \re{est-linhs} is sharp.
\begin{proposition}
Suppose that for any $\varphi\in H^s(\R^3)$ we have
\begin{equation}\label{est-sharp}
\|t^\alpha U(t)\varphi\|_{L^2_xL^\infty_{yzT}}\lesssim \|\varphi\|_{H^s},
\end{equation}
for some $\alpha\ge 0$. Then it must be the case that $s\ge 1$.
\end{proposition}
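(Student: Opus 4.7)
The plan is to construct a one-parameter family of test functions $\varphi_N$ of Knapp type and show that \re{est-sharp} applied to them forces $N^{1-s}$ to stay bounded as $N\to\infty$, hence $s\geq 1$. I would take $\widehat{\varphi_N}=\chi_{Q_N}$, with $Q_N$ a box in Fourier space carefully chosen to exploit a degeneracy of the symbol.

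The key geometric observation is that the Hessian of $\omega(\bar\xi)=\xi(\xi^2+\eta^2+\mu^2)$ degenerates on the hyperplane $\{\xi=0\}$: the diagonal entries $\omega_{\eta\eta}=\omega_{\mu\mu}=2\xi$ vanish there, and only the mixed entries $\omega_{\xi\eta}=2\eta,\omega_{\xi\mu}=2\mu$ survive. I exploit this by centering $Q_N$ at $\bar\xi_0=(0,N,N)$ with sides $(a,b,c)\sim(N^{-2},N,N)$, so that $|Q_N|\sim 1$. Writing $\bar\xi=\bar\xi_0+\bar\xi'$ on $Q_N$ one expands
$$\omega(\bar\xi)=2N^2\xi'+2N\xi'(\eta'+\mu')+\xi'(\xi'^2+\eta'^2+\mu'^2).$$
A direct check shows that for $T=1/2$ (say), the non-linear contributions to the total phase, namely $2tN\xi'(\eta'+\mu')$ and $t\xi'(\xi'^2+\eta'^2+\mu'^2)$, remain uniformly $O(1)$ on $Q_N$ for every $t\in[0,T]$, regardless of $\bar x$. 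Hence they contribute only a bounded multiplicative factor to $U(t)\varphi_N(\bar x)$, and coherence is controlled by the linear part $(x+2tN^2)\xi'+y\eta'+z\mu'$.

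Since $|Q_N|\sim 1$ and $|\bar\xi|\sim N$ on $Q_N$, one has $\|\varphi_N\|_{H^s}\sim N^s$. On the other hand, for $(\bar x,t)$ in the coherence region $|x+2tN^2|\lesssim N^2$, $|y|,|z|\lesssim N^{-1}$, $t\in[0,T]$, the linear phase is bounded, whence $|U(t)\varphi_N(\bar x)|\gtrsim|Q_N|\sim 1$. Specializing to $y=z=0$ and $t_x:=-x/(2N^2)\in[0,T/2]$ for every $x\in[-N^2T,0]$, we obtain
$$\|t^\alpha U(t)\varphi_N\|_{L^2_xL^\infty_{yzT}}^2\gtrsim\int_{-N^2T}^0 t_x^{2\alpha}\,dx\sim N^2T^{2\alpha+1},$$
so that the left-hand side of \re{est-sharp} applied to $\varphi_N$ is bounded below by $NT^{\alpha+1/2}$. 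Combined with $\|\varphi_N\|_{H^s}\sim N^s$ and the assumption \re{est-sharp}, this yields $N^{1-s}\lesssim T^{-\alpha-1/2}$ uniformly in $N\gg 1$, forcing $s\geq 1$.

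The delicate step is checking that one can indeed take the box as large as $N\times N$ in the $(\eta,\mu)$ directions while keeping all higher-order phase contributions $O(1)$; this relies crucially on the vanishing $\omega_{\eta\eta}=\omega_{\mu\mu}=0$ at $\xi=0$, a special feature of the ZK dispersion relation. This degeneracy is precisely what makes the Knapp example work and is ultimately responsible for the critical nature of the index $s=1$.
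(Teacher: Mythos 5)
Your proposal is correct and follows essentially the same route as the paper: the paper's counterexample $\widehat{\varphi_k}(\bar{\xi})=\delta_{-2k}(\xi)\delta_k(\eta)\delta_k(\mu)$ is exactly your Knapp box of dimensions $N^{-2}\times N\times N$ (with $N=2^k$) exploiting the degeneracy of $\omega$ at $\xi=0$, and it produces the same lower bound $\gtrsim N$ from the coherence interval $|x|\lesssim N^2$ against $\|\varphi_N\|_{H^s}\sim N^s$. The only cosmetic difference is that the paper fixes $t=\eps$ and $y=z=\eps 2^{-k}$ instead of letting $t$ depend on $x$, so the weight $t^\alpha$ is absorbed as a harmless constant.
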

\begin{proof}
 Let us define the smooth functions $\varphi_k$ through their Fourier transforms by $$\hat{\varphi}(\bar{\xi}) = \delta_{-2k}(\xi)\delta_k(\eta)\delta_k(\mu)$$
for $k\ge 0$. Then it is easy to see that
\begin{equation}\label{est-phik}
 \|\varphi_k\|_{H^s}\sim 2^{ks}.
\end{equation}
On the other hand, for $\eps>0$ small enough, we set $t=\eps$ and $y=z=\eps 2^{-k}$ so that $|y\eta+z\mu+t\omega(\bar{\xi})|\lesssim \eps$. Choosing $|x|\ll 2^{2k}$, we obtain the lower bound
\begin{align*}
 |U(t)\varphi_k(\bar{x})| &= \left|\int_{\R^3}e^{i(\bar{x}\bar{\xi}+t\omega(\bar{\xi}))}\varphi_k(\bar{\xi})d\bar{\xi}\right|\\
&= \Big|\int_{\R^3}[e^{i(y\eta+z\mu+t\omega(\bar{\xi}))}-1]e^{ix\xi}\delta_{-2k}(\xi)\delta_k(\eta)\delta_k(\mu)d\bar{\xi}\\
&\quad +\int_{\R^3}e^{ix\xi}\delta_{-2k}(\xi)\delta_k(\eta)\delta_k(\mu)d\bar{\xi}\Big|\\
&\gtrsim 1.
\end{align*}
It follows that $\|t^\alpha U(t)\varphi_k\|_{L^2_xL^\infty_{yzT}}\gtrsim 2^k$ where the implicit constant does not depend on $k$. Therefore, \re{est-sharp} and \re{est-phik} imply
$$2^k\lesssim 2^{ks}.$$
From this, we get for large $k$ that $s\ge 1$.
\end{proof}

In the sequel of this section we prove retarded linear estimates which will be used later to perform the fixed point argument.
\begin{proposition}
 Let $f\in\S(\R^4)$. Then we have
\begin{equation}
\label{est-nhgroup} \left\|\nabla\int_0^tU(t-t')f(t')dt'\right\|_{L^\infty_TL^2_{\bar{x}}}\lesssim \|f\|_{L^1_xL^2_{yzT}}.
\end{equation}
\end{proposition}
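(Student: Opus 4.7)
The plan is to use a standard $TT^*$/duality argument, pairing the retarded Duhamel operator against its adjoint, which is controlled by the Kato smoothing estimate of Proposition \ref{prop-smooth}. Since $\nabla$ is a Fourier multiplier, it commutes with $U(t-t')$, so I first write
$$\nabla v(t) := \nabla\int_0^t U(t-t')f(t')dt' = \int_0^t U(t-t')\nabla f(t')dt'.$$
For each fixed $t\in[0,T]$, I would then estimate $\|\nabla v(t)\|_{L^2_{\bar x}}$ by duality against an arbitrary $g\in L^2(\R^3)$ with $\|g\|_{L^2}=1$.

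Using the fact that $U(s)^*=U(-s)$ is unitary on $L^2$, the pairing becomes
$$\langle \nabla v(t), g\rangle_{L^2_{\bar{x}}} = \int_0^t\langle U(t-t')\nabla f(t'), g\rangle_{L^2_{\bar x}}dt' = \int_0^t\langle \nabla f(t'), U(t'-t)g\rangle_{L^2_{\bar x}} dt'.$$
An integration by parts in $\bar x$ moves $\nabla$ onto $U(t'-t)g$, producing the bilinear expression
$$-\int_0^t\int_{\R^3}f(t',\bar{x})\cdot \overline{\nabla U(t'-t)g(\bar x)}\,d\bar x\,dt'.$$

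The next step is a Hölder inequality in $(x,y,z,t')$: Cauchy-Schwarz in $(y,z,t')$ followed by duality $L^1_x$ vs.\ $L^\infty_x$ yields
$$|\langle \nabla v(t), g\rangle|\lesssim \|f\|_{L^1_xL^2_{yzT}}\,\|\nabla U(t'-t)g\|_{L^\infty_xL^2_{yzT}}.$$
By translation invariance of the $L^2$ norm in the time variable, the last factor is controlled by $\|\nabla U(\tau)g\|_{L^\infty_xL^2_{yz\tau}(\R)}$, which Proposition \ref{prop-smooth} bounds by $\|g\|_{L^2}=1$. Taking the supremum over $g$ and then over $t\in[0,T]$ yields estimate \re{est-nhgroup}.

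There is no real obstacle here; the only point requiring minor care is the truncation of the time integral to $[0,t]$, which is harmless since one may extend $f$ by zero outside $[0,T]$ (or, equivalently, dominate the $L^2_{yzt}$ norm over $[0,t]$ by the one over $[0,T]$ and then over $\R$) so that the full-line Kato smoothing estimate can be invoked uniformly in $t$.
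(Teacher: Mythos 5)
Your argument is correct and is essentially the paper's own proof: the paper simply quotes the dual estimate of the Kato smoothing bound \re{smooth} (which is exactly what your duality pairing against $g\in L^2$ re-derives), uses unitarity of $U(t)$ to pass to $U(t-t')$, and handles the retardation by replacing $f(t')$ with $\chi_{[0,t]}(t')f(t')$, just as you do. No gaps; the presentation order differs but the ideas coincide.
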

\begin{proof}
 The dual estimate of \re{smooth} reads
\begin{equation}\label{dual-smooth}\left\|\int_{-\infty}^\infty U(-t')\nabla f(t')dt'\right\|_{L^2}\lesssim \|f\|_{L^1_xL^2_{yzT}}.\end{equation}
Noticing that $U(t)$ is a unitary group on $L^2(\R^3)$ we obtain for any fixed $t$,
\begin{equation}\label{est-dual}
\left\|\int_{-\infty}^\infty U(t-t')\nabla f(t')dt'\right\|_{L^2_{\bar{x}}}\lesssim \|f\|_{L^1_xL^2_{yzT}}.
\end{equation}
To conclude we substitute in \re{est-dual} $f(t')$ by $\chi_{[0,t]}(t')f(t')$ and then take the supremum in time in the left-hand side of the resulting inequality.
\end{proof}

\begin{proposition}
 Let $f\in\S(\R^4)$. Then we have
\begin{equation}
\label{est-nhsmooth} \left\|\nabla^2\int_0^tU(t-t')f(t')dt'\right\|_{L^\infty_xL^2_{yzT}}\lesssim \|f\|_{L^1_xL^2_{yzT}}.
\end{equation}
\end{proposition}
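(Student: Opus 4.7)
\noindent\emph{Proof plan.}
The idea is a $TT^*$-type argument based on the Kato smoothing estimate \re{smooth} and its dual \re{dual-smooth}. Introduce the linear operator $A\colon L^2(\R^3)\to L^\infty_xL^2_{yzT}$ defined by $A\varphi(t)=\nabla U(t)\varphi$; by Proposition \ref{prop-smooth}, $\|A\|\lesssim 1$. Since $U(t)$ is unitary on $L^2(\R^3)$, its formal adjoint is
\[A^*g=\int_\R U(-t')\nabla g(t')\,dt',\]
and \re{dual-smooth} (applied to the zero extension of $g$ outside $[0,T]$) expresses precisely the boundedness $A^*\colon L^1_xL^2_{yzT}\to L^2(\R^3)$.

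Composing these two operators,
\[AA^*g(t)=\nabla U(t)\int_\R U(-t')\nabla g(t')\,dt'=\int_\R U(t-t')\nabla^2 g(t')\,dt',\]
so $\|AA^*\|\lesssim 1$ immediately delivers the non-retarded analogue of \re{est-nhsmooth}:
\[\left\|\int_\R U(t-t')\nabla^2 g(t')\,dt'\right\|_{L^\infty_xL^2_{yzT}}\lesssim \|g\|_{L^1_xL^2_{yzT}}.\]

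It then remains to transfer this full-line estimate to the truncated Duhamel integral $\int_0^tU(t-t')\nabla^2 f(t')\,dt'$, for which I would invoke the Christ--Kiselev lemma. The subtle point is that the input space $L^1_xL^2_{yzT}$ and the output space $L^\infty_xL^2_{yzT}$ share the same $L^2_t$ integrability in time, so the basic form of Christ--Kiselev (which requires strict inequality of the time exponents) does not apply verbatim. This obstruction can be handled either by rearranging norms through Minkowski's integral inequality in order to create a genuine gap between input and output time exponents, or by splitting the characteristic function as $\chi_{\{t'<t\}}=\tfrac12(1+\sgn(t-t'))$: the symmetric piece is controlled by the full-line bound just obtained, and the $\sgn$-twisted piece is controlled by exploiting the $L^2$-boundedness of the Hilbert transform in $t$.

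The principal obstacle is precisely this passage from the non-retarded to the retarded formulation. The $TT^*$ computation itself is essentially automatic given Proposition \ref{prop-smooth} and \re{dual-smooth}, but the matched $L^2$ time exponents on both sides prevent a direct application of Christ--Kiselev and force one to work either with a norm rearrangement or with a Hilbert-transform-based workaround.
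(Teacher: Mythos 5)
Your $TT^*$ step and your diagnosis that Christ--Kiselev fails here are both correct, and they match half of the paper's argument: writing $\chi_{[0,t]}(t')=\tfrac12[\sgn(t-t')+\sgn(t')]$, the paper disposes of the $\sgn(t')$ piece exactly as you dispose of the symmetric piece, by composing \re{smooth} with its dual \re{dual-smooth} (the integral $\int_\R U(-t')\sgn(t')f(t')\,dt'$ is a fixed $L^2$ function, so one derivative is absorbed by the dual smoothing estimate and the other by the direct one). The gap is in the remaining $\sgn(t-t')$ piece, which is where all the work lies. Your suggestion to control it by the $L^2$-boundedness of the Hilbert transform in $t$ cannot work as stated: conjugating by the group, $\int_\R\sgn(t-t')U(t-t')\nabla^2f(t')\,dt'=U(t)\,H\left[U(-\cdot)\nabla^2 f(\cdot)\right](t)$, and while $H$ is bounded on $L^2_tL^2_{\bar{x}}$, the estimate you need goes from $L^1_xL^2_{yzT}$ into $L^\infty_xL^2_{yzT}$ with a gain of two derivatives; the smoothing estimate \re{smooth} applies to $U(t)\varphi$ for a \emph{fixed} $\varphi$ and cannot be applied pointwise in $t$ to $U(t)F(t)$ with $F$ time-dependent, so this factorization does not close. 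Likewise, no Minkowski rearrangement can create a gap between the time exponents, since both norms carry exactly $L^2$ in $t$; Christ--Kiselev is genuinely unavailable at this endpoint.

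What the paper does instead for the $\sgn(t-t')$ piece is an explicit multiplier computation. Taking the space-time Fourier transform, this term has symbol $\widehat{\sgn}(\tau-\omega(\bar{\xi}))\,|\bar{\xi}|^2$, i.e.\ a constant times $\pv |\bar{\xi}|^2/(\tau-\omega(\bar{\xi}))$; after Plancherel in $(y,z,t)$ the desired bound reduces, via Young's inequality in $x$, to showing that the kernel
$$K(\tau,x,\eta,\mu)=\pv\int_\R e^{ix\xi}\,\frac{|\bar{\xi}|^2}{\tau-\omega(\bar{\xi})}\,d\xi$$
is uniformly bounded. This is proved by rescaling to $\int_\R e^{iy\xi}(\xi^2+1)/(c-\xi(\xi^2+1))\,d\xi$ and performing a partial fraction expansion around the unique real root $\alpha$ of $c-X(X^2+1)$, each resulting term being bounded uniformly in $\alpha$ (one is a translated Hilbert kernel, the others are integrable). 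This kernel bound exploits the specific cubic structure of $\omega(\bar{\xi})=\xi(\xi^2+\eta^2+\mu^2)$ and is the actual content of the proposition; without it, or some substitute for it, your proof is incomplete.
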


\begin{proof}
 We first observe that for any $g:\R\to\R$,
$$\int_0^tg(t')dt' = \frac 12\int_\R g(t')\sgn(t-t')dt' + \frac 12\int_\R g(t')\sgn(t')dt',$$
where $\sgn(\cdot)$ denotes the sign function. Consequently we have
\begin{align}
\notag \nabla^2R(t,\bar{x}) &= \nabla^2\int_0^tU(t-t')f(t')dt'\\
\notag&= \frac 12\nabla^2\int_\R U(t-t')f(t')\sgn(t-t')dt' + \frac 12\nabla^2\int_\R U(t-t')f(t')\sgn(t')dt'\\
\label{dec-R}&= \frac 12\nabla^2R_1(t,\bar{x})+\frac 12\nabla^2R_2(t,\bar{x}).
\end{align}
Taking the inverse space-time Fourier transform, it is clear that
$$R_1 = \F^{-1}_{\tau \bar{\xi}}\left(\widehat{\sgn}(\tau-\omega(\bar{\xi}))\hat{f}(\tau,\bar{\xi})\right).$$
Hence we get by Plancherel theorem
\begin{align}
\notag\|\nabla^2 R_1\|_{L^2_{yzt}} &= \left\|\F^{-1}_\xi\left(|\bar{\xi}|^2\widehat{\sgn}(\tau-\omega(\bar{\xi}))\hat{f}(\tau,\bar{\xi})\right)\right\|_{L^2_{\eta\mu\tau}}\\
\label{est-R11}& = \|K(\tau,x,\eta,\mu)\ast_x\F_{yzt}(f(x))(\eta,\mu,\tau)\|_{L^2_{\eta\mu\tau}}
\end{align}
where $K$ is the inverse Fourier transform (in $\xi$) of the tempered distribution defined as the principal value of $|\bar{\xi}|^2/(\tau-\omega(\bar{\xi}))$. It follows that
\begin{align*}
K(\tau,x,\eta,\mu) &= \int_\R e^{ix\xi}\frac{|\bar{\xi}|^2}{\tau-\omega(\bar{\xi})}d\xi\\
&= \int_\R e^{ix(\eta^2+\mu^2)^{1/2}\xi}\frac{(\eta^2+\mu^2)^{3/2}(\xi^2+1)}{\tau-(\eta^2+\mu^2)^{3/2}\xi(\xi^2+1)}d\xi\\
&= \int_\R e^{iy\xi}\frac{\xi^2+1}{c-\xi(\xi^2+1)}d\xi
\end{align*}
with $y=(\eta^2+\mu^2)^{1/2}x$ and $c=\tau/(\eta^2+\mu^2)^{3/2}$. Next, a partial fraction expansion leads to
$$\frac{\xi^2+1}{c-\xi(\xi^2+1)} = -\frac{\alpha^2+1}{(3\alpha^2+1)(\xi-\alpha)} - \frac{\alpha}{3\alpha^2+1}\frac{2\alpha\xi+\alpha^2-1}{\xi^2+\alpha\xi+\alpha^2+1}$$
where $\alpha$ is the unique real root of $c-X(X^2+1)$. Therefore, we get
\begin{align*}
|K(\tau,x,\eta,\mu)|&\lesssim \frac{\alpha^2+1}{3\alpha^2+1}\left|\int_\R \frac{e^{iy\xi}}{\xi-\alpha}d\xi\right|
+\frac{2\alpha^2}{3\alpha^2+1}\left|\int_\R e^{iy\xi}\frac{\xi}{\xi^2+\alpha\xi+\alpha^2+1}d\xi\right|\\
&\quad +\frac{|\alpha(\alpha^2-1)|}{3\alpha^2+1}\left|\int_\R \frac{e^{iy\xi}}{\xi^2+\alpha\xi+\alpha^2+1}d\xi\right|\\
&= K_1+K_2+K_3.
\end{align*}
The first term $K_1$ is bounded by an integral witch is the Fourier transform of a function that behaves near the singular points like the kernel of the Hilbert transform $1/\xi$ (or its translates) whose Fourier transform is $\sgn(x)$. It follows that $K_1$ is bounded uniformly in $\alpha$ and $y$. In the same way, $K_2\in L^\infty$ since
\begin{align*}
K_2 &\lesssim \left|\int_\R e^{iy\xi}\frac{\xi}{\left(\xi+\frac\alpha 2\right)^2+\frac{3\alpha^2}4+1}d\xi\right|
\lesssim \frac 1{3\alpha^2+4}\left|\int_\R e^{iy\xi}\frac{\xi}{\left(\frac{\xi}{\sqrt{3\alpha^2+4}}\right)^2+1}d\xi\right|\\
&\lesssim \left|\int_\R e^{iz\xi}\frac{\xi}{\xi^2+1}d\xi\right|
\end{align*}
where $z=y\sqrt{3\alpha^2+4}$. Concerning $K_3$, it is easily estimated by
\begin{align*}
K_3 &\lesssim \frac{|\alpha(\alpha^2-1)|}{3\alpha^3+1}\int_\R\frac{d\xi}{\xi^2+\frac{3\alpha^2}{4}+1}\\
&\lesssim \frac{|\alpha(\alpha^2-1)|\sqrt{3\alpha^2+4}}{(3\alpha^2+1)(3\alpha^2+4)}\lesssim 1.
\end{align*}
We deduce that $K\in L^\infty(\R^4)$. Hence, \re{est-R11} combining with the Young inequality yields
\begin{equation}\label{est-R1}\|\nabla^2 R_1\|_{L^\infty_xL^2_{yzt}} \lesssim \|K\|_{L^\infty}\|\F_{yzt}(f(x))\|_{L^1_xL^2_{\eta\mu\tau}}\lesssim \|f\|_{L^1_xL^2_{yzt}}.\end{equation}
To treat the contribution of $R_2$, we use the smoothing bound \re{smooth} and its dual estimate \re{dual-smooth} to get
\begin{align}
\notag\|\nabla^2R_2\|_{L^\infty_xL^2_{yzt}} &\lesssim \left\|\nabla\int_\R U(-t')f(t')\sgn(t')dt'\right\|_{L^2}\\
\label{est-R2}&\lesssim \|f\|_{L^1_xL^2_{yzt}}.
\end{align}
Estimates \re{est-R1}-\re{est-R2} together with \re{dec-R} yield the desired bound.
\end{proof}

\begin{proposition}
 Let $T\le 1$, $k\ge 0$ and $\alpha\ge 3/8$. Then for any $f\in\S(\R^4)$,
\begin{align}
\label{est-nhmax0} \left\|\int_0^t U(t-t')P_0f(t')dt'\right\|_{L^2_xL^\infty_{yzT}}&\lesssim \|P_0f\|_{L^1_xL^2_{yzT}},\\
\label{est-nhmax1}\left\|t^{\alpha}\int_0^t U(t-t')\Delta_kf(t')dt'\right\|_{L^2_xL^\infty_{yzT}} &\lesssim \|\Delta_kf\|_{L^1_xL^2_{yzT}}.
\end{align}
\end{proposition}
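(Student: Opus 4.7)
I would first establish the non-retarded analogues of \re{est-nhmax0}--\re{est-nhmax1} (with $\int_0^T$ in place of $\int_0^t$) by a $TT^*$-type composition through $L^2(\R^3)$: the maximal estimate of Proposition \ref{prop-max} furnishes an $L^2\to L^2_xL^\infty_{yzT}$ bound, while the dual of Kato's smoothing, already recorded in \re{dual-smooth}, furnishes an $L^1_xL^2_{yzT}\to L^2$ bound. A Christ-Kiselev argument would then upgrade these full-integral estimates to the retarded ones.

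For \re{est-nhmax1}, I write $\int_0^T U(t-t')\Delta_k f\,dt'=U(t)\psi$ with $\psi:=\int_0^T U(-t')\Delta_k f(t')\,dt'$. Proposition \ref{prop-max}(1) yields $\|t^\alpha\Delta_k U(t)\psi\|_{L^2_xL^\infty_{yzT}}\lesssim 2^k\|\Delta_k\psi\|_{L^2}$, while \re{dual-smooth} applied to $\Delta_k$-localized data, together with the equivalence $\|\nabla\Delta_k\cdot\|_{L^2}\sim 2^k\|\Delta_k\cdot\|_{L^2}$, gives $\|\Delta_k\psi\|_{L^2}\lesssim 2^{-k}\|\Delta_k f\|_{L^1_xL^2_{yzT}}$; the factors $2^{\pm k}$ cancel in the product. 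For \re{est-nhmax0} the same scheme requires a $P_0$-analogue of the dual smoothing, since \re{smooth} carries a $\nabla$ that cannot be inverted at zero frequency. Here I observe that for each $(y,z)$, the partial $x$-Fourier transform of $P_0u$ is supported in $|\xi|\lesssim 1$, so Bernstein in $x$ gives $\|P_0 u\|_{L^\infty_xL^2_{yz}}\lesssim \|P_0 u\|_{L^2_{\bar{x}}}$; combined with the $L^2$-unitarity of $U(t)$ and $T\le 1$, this produces a Kato-type bound at zero frequency, whose dual reads $\|\int_\R U(-t')P_0 f(t')\,dt'\|_{L^2}\lesssim \|P_0 f\|_{L^1_xL^2_{yzT}}$. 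Composing with Proposition \ref{prop-max}(2) gives the $\int_0^T$ form of \re{est-nhmax0}.

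To pass from $\int_0^T$ to $\int_0^t$, I invoke the Christ-Kiselev lemma: in the time variable the input norm $L^1_xL^2_{yzT}$ carries $L^2_T$-integrability and the output $L^2_xL^\infty_{yzT}$ carries $L^\infty_T$, so the strict ordering $2<\infty$ permits the lemma to apply in its Banach-space-valued form, with the spatial coordinates acting as fixed Banach spaces. This last step is the one I expect to require the most care: since the mixed-norm spaces do not factor naturally as $L^p_T(X)$, a Minkowski-type rearrangement of iterated norms is needed to reconcile them with the standard Banach-valued form of Christ-Kiselev, a bookkeeping that is standard in the dispersive literature but not entirely routine in this mixed-norm setup.
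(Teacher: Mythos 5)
Your first step --- obtaining the non-retarded versions of \re{est-nhmax0}--\re{est-nhmax1} by composing the maximal estimate \re{maxEst3d} with the dual smoothing bound \re{dual-smooth} (the factors $2^{\pm k}$ cancelling), and handling the zero-frequency case by Bernstein in $x$ --- is exactly what the paper does, and is correct. The gap is in the passage from $\int_0^T$ to $\int_0^t$. The Christ--Kiselev lemma in its Banach-valued form requires the non-retarded operator to be bounded from $L^p_{t'}(B_1)$ to $L^q_t(B_2)$ with $p<q$, and here both of the rearrangements you would need go the wrong way. On the input side, Minkowski gives $\|f\|_{L^2_{t'}(L^1_xL^2_{yz})}\le\|f\|_{L^1_xL^2_{yzt'}}$, so the estimate you have proved (with the \emph{larger} norm $L^1_xL^2_{yzT}$ on the right) does not imply the hypothesis with $L^2_{t'}$ outermost; that would be a strictly stronger, unproven statement. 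On the output side, $\|g\|_{L^\infty_T(L^2_xL^\infty_{yz})}\le\|g\|_{L^2_xL^\infty_{yzT}}$, so a Christ--Kiselev conclusion with $L^\infty_T$ outermost would be weaker than \re{est-nhmax1}, where the supremum in $t$ must sit inside the $L^2_x$ norm (and it is this stronger form that the bilinear estimates of Section \ref{sec-th} actually use). One can also see the obstruction in the divisibility formulation of Christ--Kiselev: with respect to partitions of $[0,T]$ both $L^1_xL^2_{yzT}$ and $L^2_xL^\infty_{yzT}$ are only $2$-divisible, so the exponent gap needed to sum the Whitney decomposition is absent.

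Fortunately no Christ--Kiselev argument is needed, precisely because the output norm is $L^\infty$ in time: for each fixed $t$ the retarded integral coincides with the non-retarded operator applied to the truncated input $f\chi_{[0,t]}$, and truncation can only decrease $\|\cdot\|_{L^1_xL^2_{yzT}}$. The one genuine subtlety is that the supremum over $t$ lies \emph{inside} the $L^2_x$ norm, so one cannot simply take the supremum of the fixed-$t$ bounds. The paper resolves this by letting the evaluation time be a measurable function $t_0(\bar{x})$ of the space variable, chosen (using the continuity of $t\mapsto H_k(t,\bar{x})$ and a measurable selection) to realize $\sup_{t\in[0,T]}$ pointwise in $\bar{x}$, and then applying the non-retarded estimate to $f(t',\bar{x})\chi_{[0,t_0(\bar{x})]}(t')$. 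Replacing your last step by this truncation-plus-measurable-selection argument completes the proof.
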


\begin{proof}
Combining estimates \re{maxEst3d} and \re{dual-smooth} we obtain the non-retarded version of \re{est-nhmax1}:
\begin{equation}\label{est-nonre}\left\|t^{\alpha}\int_0^T U(t-t')\Delta_kf(t')dt'\right\|_{L^2_xL^\infty_{yzT}} \lesssim \|\Delta_kf\|_{L^1_xL^2_{yzT}}.\end{equation}
We consider the function $H_k(t,\bar{x}) = \left|t^{\alpha}\int_0^t U(t-t')\Delta_kf(t')dt'\right|$ that we may always assume to be continuous on $[0,T]\times\R^3$.
From \re{est-nonre} it follows that for all measurable function $t:\R^3\to [0,T]$,
$$\left\|t(x)^{\alpha}\int_0^T U(t(x)-t')\Delta_kf(t',\bar{x})dt'\right\|_{L^2_xL^\infty_{yz}} \lesssim \|\Delta_kf\|_{L^1_xL^2_{yzT}}.$$
Replacing now $f(t',\bar{x})$ by $f(t',\bar{x})\chi_{[0,t(\bar{x})]}(t')$ we see that
\begin{equation}\label{est-Hk}
\|H_k(t(\bar{x}),\bar{x})\|_{L^2_xL^\infty_{yz}}\lesssim \|\Delta_kf\|_{L^1_xL^2_{yzT}}.
\end{equation}
Since the map $t\mapsto H_k(t,\bar{x})$ is continuous on the compact set $[0,T]$, there exists $\alpha\in[0,T]$ such that $H_k(\alpha,\bar{x})=\sup_{t\in[0,T]}H_k(t,\bar{x})$, therefore
the map
$$\bar{x}\mapsto t_0(\bar{x})=\inf\left\{\alpha\in[0,T]: H_k(\alpha,\bar{x})=\sup_{t\in[0,T]}H_k(t,\bar{x})\right\}$$
is well-defined and measurable on $\R^3$. Hence, choosing $t_0$ in \re{est-Hk} we infer that
$$\|H_k(t_0(\bar{x}),\bar{x})\|_{L^2_xL^\infty_{yz}}\lesssim \|\Delta_kf\|_{L^1_xL^2_{yzT}},$$
which yields
$$\left\|\sup_{t\in[0,T]}H_k(t,\bar{x})\right\|_{L^2_xL^\infty_{yz}}\lesssim \|\Delta_kf\|_{L^1_xL^2_{yzT}}$$
and ends the proof of \re{est-nhmax1}. The proof of \re{est-nhmax0} is similar and therefore will be omitted.
\end{proof}

\section{Proofs of Theorems \re{th-besov} and \re{th-sobolev}}\label{sec-th}
In this section we solve \re{zk} in the spaces $B^{1,1}_2(\R^3)$ and $H^s(\R^3)$, $s>1$. We consider the associated integral equation
\begin{equation}
u(t) = U(t)u_0 - \frac 12\int_0^tU(t-t')\partial_x(u^2)(t')dt'.
\end{equation}
\subsection{Well-posedness in $B^{1,1}_2(\R^3)$}\label{sec-wpbesov}
For $u_0\in B^{1,1}_2(\R^3)$, we look for a solution in the space
$$X_T = \{u\in C_b([0,T], B^{1,1}_2): \|u\|_{X_T}<\infty\}$$
for some $T>0$ and where
$$\|u\|_{X_T}=N(u)+T(u)+M(u)$$
with
\begin{align*}
N(u) &= \|P_0u\|_{L^\infty_TL^2_{\bar{x}}} + \sum_{k\ge 0}2^k\|\Delta_ku\|_{L^\infty_TL^2_{\bar{x}}},\\
T(u) &= \|P_0u\|_{L^\infty_xL^2_{yzT}} + \sum_{k\ge 0}2^{2k}\|\Delta_ku\|_{L^\infty_xL^2_{yzT}},\\
M(u) &= \|P_0u\|_{L^2_xL^\infty_{yzT}} + \sum_{k\ge 0}\|t^\alpha\Delta_ku\|_{L^2_xL^\infty_{yzT}},
\end{align*}
and $3/8\le\alpha<1/2$. First from Propositions \ref{prop-smooth}, \ref{prop-max} together with the obvious bound
\begin{equation}\label{est-group}\|U(t)u_0\|_{L^\infty_TL^2_{\bar{x}}}\lesssim \|u_0\|_{L^2},\end{equation}
we get the following linear estimate:
\begin{equation}\label{est-linXT}
\|U(t)u_0\|_{X_T}\lesssim \|u_0\|_{B^{1,1}_2}.
\end{equation}
Now we need to estimate
$$\left\|\int_0^tU(t-t')\partial_x(u^2)(t')dt'\right\|_{X_T}$$
in terms of the $X_T$-norm of $u$. Using standard paraproduct rearrangements, we can rewrite $\Delta_k(u^2)$ as
\begin{align*}
 \Delta_k(u^2) &= \Delta_k\left[\lim_{j\to\infty}P_j(u)^2\right]\\
&= \Delta_k\left[P_0(u)^2+\sum_{j\ge 0}(P_{j+1}(u)^2-P_j(u)^2)\right]\\
&= \Delta_k\left[P_0(u)^2+\sum_{j\gtrsim k}\Delta_{j+1}u(P_{j+1}u+P_ju)\right].
\end{align*}
On the other hand, by similar considerations, we see that $P_0(u^2)$ can be rewritten as
$$P_0(u^2) = P_0[P_0(u)^2]+P_0\left[\sum_{j\ge 0}\Delta_{j+1}u(P_{j+1}u+P_ju)\right].$$
Hence, without loss of generality, we can restrict us to consider only terms of the form:
$$A = P_0[P_0(u)^2],\quad B = \Delta_k\left[\sum_{j\gtrsim k}\Delta_juP_ju\right], \quad C = P_0\left[\sum_{j\ge 0}\Delta_juP_ju\right]$$
for $k\ge 0$, since the estimates for the other terms would be similar.

By virtue of \re{est-nhgroup}-\re{est-nhsmooth}-\re{est-nhmax0} and \re{est-nhmax1}, we infer that
\begin{multline}\label{est-nl0}
 \left\|\int_0^tU(t-t')\partial_x(u^2)(t')dt'\right\|_{X_T} \lesssim \|P_0(u)^2\|_{L^1_xL^2_{yzT}} \\+
\sum_{k\ge 0}2^k\left(\sum_{j\gtrsim k}\|\Delta_juP_ju\|_{L^1_xL^2_{yzT}}\right)
+\sum_{j\ge 0}\|\Delta_juP_ju\|_{L^1_xL^2_{yzT}}.
\end{multline}
The first term in the right hand side is bounded by
\begin{align}
\notag\|P_0(u)^2\|_{L^1_xL^2_{yzT}} &\lesssim \|P_0u\|_{L^2}\|P_0u\|_{L^2_xL^\infty_{yzT}}\\
\notag &\lesssim T^{1/2}\|P_0u\|_{L^\infty_TL^2_{\bar{x}}}M(u)\\
\label{est-nl3} &\lesssim T^{1/2}\|u\|_{X_T}^2.
\end{align}
To evaluate the contribution of $B$, note that
\begin{align}
\notag\|\Delta_juP_ju\|_{L^1_xL^2_{yzT}} &= \|(t^{-\alpha}\Delta_ju)(t^\alpha P_ju)\|_{L^1_xL^2_{yzT}} \\
 \notag&\lesssim \|t^{-\alpha}\Delta_ju\|_{L^2}\|t^\alpha P_ju\|_{L^2_xL^\infty_{yzT}}\\
 \label{est-nl1}&\lesssim T^\mu\|\Delta_ju\|_{L^\infty_TL^2_{\bar{x}}}\|t^\alpha P_ju\|_{L^2_xL^\infty_{yzT}}
\end{align}
where $\mu = 1/2-\alpha>0$. Therefore, since
\begin{equation}\label{est-nl2}
 \|t^\alpha P_ju\|_{L^2_xL^\infty_{yzT}}\lesssim \|P_0u\|_{L^2_xL^\infty_{yzT}}+\sum_{k=0}^j\|t^\alpha \Delta_ku\|_{L^2_xL^\infty_{yzT}}\lesssim M(u),
\end{equation}
we deduce from the discrete Young inequality that
\begin{align}
\notag \sum_{k\ge 0}2^k\left(\sum_{j\gtrsim k}\|\Delta_juP_ju\|_{L^1_xL^2_{yzT}}\right) &\lesssim T^\mu M(u)\sum_{k\ge 0}\left(\sum_{j\gtrsim k}2^{k-j}(2^j\|\Delta_ju\|_{L^2_xL^\infty_{yzT}})\right)\\
\notag&\lesssim T^\mu\|u\|_{X_T}\sum_{k\ge 0}2^k\|\Delta_ku\|_{L^2_xL^\infty_{yzT}}\\
\label{est-nl4}&\lesssim T^\mu\|u\|_{X_T}^2.
\end{align}
Using again estimates \re{est-nl1}-\re{est-nl2}, we easily get that the last term in the r.h.s. of \re{est-nl0} can be estimated by
\begin{equation}\label{est-nl5}
 \sum_{j\ge 0}\|\Delta_juP_ju\|_{L^1_xL^2_{yzT}} \lesssim T^\mu M(u)\sum_{j\ge 0}\|\Delta_ju\|_{L^\infty_TL^2_{\bar{x}}}\lesssim T^\mu\|u\|_{X_T}^2.
\end{equation}
Hence, gathering \re{est-linXT}-\re{est-nl0}-\re{est-nl3}-\re{est-nl4} and \re{est-nl5}, we infer that
$$\|\mathcal{G}u\|_{X_T}\lesssim \|u_0\|_{B^{1,1}_2}+T^\mu\|u\|_{X_T}^2,$$
and in the same way,
$$\|\mathcal{G}u-\mathcal{G}v\|_{X_T}\lesssim (\|u\|_{X_T}+\|v\|_{X_T})\|u-v\|_{X_T}.$$
Hence for $T>0$ small enough, $\mathcal{G}$ is a strict contraction in some ball of $X_T$. Theorem \ref{th-besov} follows then from standard arguments.

\subsection{Well-posedness in $H^s(\R^3)$, $s>1$}
Let $u_0\in H^s(\R^3)$ with $s>1$. For $T>0$, we introduce the space
$$X_T^s = \{u\in C_b([0,T], H^s) : \|u\|_{X_T^s}<\infty\}$$
where
$$\|u\|_{X_T^s} = N(u)+T(u)+M(u)$$
with
\begin{align*}
N(u) &= \|P_0u\|_{L^\infty_TL^2_{\bar{x}}} + \left(\sum_{k\ge 0}4^{sk}\|\Delta_ku\|_{L^\infty_TL^2_{\bar{x}}}^2\right)^{1/2},\\
T(u) &= \|P_0u\|_{L^\infty_xL^2_{yzT}} + \left(\sum_{k\ge 0}4^{(s+1)k}\|\Delta_ku\|_{L^\infty_xL^2_{yzT}}^2\right)^{1/2},\\
M(u) &= \|P_0u\|_{L^2_xL^\infty_{yzT}} + \left(\sum_{k\ge 0}4^{(s-1-\eps)k}\|\Delta_ku\|_{L^2_xL^\infty_{yzT}}^2\right)^{1/2},
\end{align*}
for some $\eps>0$ small enough. From Proposition \ref{prop-smooth} together with \re{est-linhs} and \re{est-group},
$$\|U(t)u_0\|_{X_T^s}\lesssim \|u_0\|_{H^s}.$$
Following the arguments given in Subsection \ref{sec-wpbesov}, it is not too hard to see that
\begin{multline*}
 \left\|\int_0^tU(t-t')\partial_x(u^2)(t')dt'\right\|_{X_T^s} \lesssim \|P_0(u)^2\|_{L^1_xL^2_{yzT}} \\+
\left(\sum_{k\ge 0}4^{sk}\left(\sum_{j\gtrsim k}\|\Delta_juP_ju\|_{L^1_xL^2_{yzT}}\right)^2\right)^{1/2}
+\sum_{j\ge 0}\|\Delta_juP_ju\|_{L^1_xL^2_{yzT}}.
\end{multline*}
The estimates for these terms follow the same lines than the $B^{1,1}_2$ case, except that we use the Young inequality for  $\ell^1\star\ell^2$, as well as the bound
\begin{align*}
 \|P_ju\|_{L^2_xL^\infty_{yzT}}&\lesssim \|P_0u\|_{L^2_xL^\infty_{yzT}}+\sum_{k=0}^j\|\Delta_ku\|_{L^2_xL^\infty_{yzT}}\\
&\lesssim M(u)+\left(\sum_{k\ge 0}4^{(1-s+\eps)k}\right)^{1/2}M(u)\\
&\lesssim M(u),
\end{align*}
as soon as $0<\eps<s-1$. This leads to
$$\|\mathcal{G}u\|_{X_T^s}\lesssim \|u_0\|_{H^s}+T^\mu\|u\|_{X_T^s}^2,$$
and
$$\|\mathcal{G}u-\mathcal{G}v\|_{X_T^s}\lesssim (\|u\|_{X_T^s}+\|v\|_{X_T})\|u-v\|_{X_T^s}.$$
This proves the existence and uniqueness of a local solution $u$ in $X_T^s$ with $T=T(\|u_0\|_{H^s})$ small enough.

\end{document}